\theoremstyle{plain}
\newtheorem{thm}{Theorem}[section]
\newtheorem{prop}[thm]{Proposition}
\newtheorem{lem}[thm]{Lemma}
\newtheorem{cor}[thm]{Corollary}
\newtheorem{rem}[thm]{Remark}
\newtheorem{defn}[thm]{Definition}
\newcommand{\LP}{L_p}
\newcommand{\X}{\mathcal{X}}
\newcommand{\decomp}{\mathcal{D}}
\newcommand{\LS}{L}
\newcommand{\RS}{R}
\newcommand{\LSo}{L_{\mathcal{D}_1}}
\newcommand{\RSo}{R_{\mathcal{D}_1}}
\newcommand{\LSt}{L_{\mathcal{D}_2}}
\newcommand{\RSt}{R_{\mathcal{D}_2}}
\newcommand{\opX}{\mathcal{L}(\X)}
\newcommand{\AD}{\mathcal{A}(\decomp)}
\newcommand{\TD}{T_{\decomp}}
\newcommand{\N}{\mathbb{N}}
\newcommand{\les}{\sigma_{l.e.}(T)}
\newcommand{\linf}{\ell_{\infty}}
\begin{document}

\begin{center}{\Huge Commutators on $\ell_1$}
\end{center}

\begin{center}{\Large Detelin Dosev \footnote{Research supported in part by NSF grant DMS-0503688}}
\end{center}

\begin{abstract}
The main result is that the commutators on $\ell_1$ are the operators not of the form $\lambda I + K$ with $\lambda\neq 0$ and $K$ compact.
 We generalize Apostol's technique  (1972, Rev. Roum. Math. Appl. 17, 1513 - 1534) to obtain this result and use this generalization to 
 obtain partial results about the commutators on spaces 
$\X$ which can be represented as $\displaystyle \X\simeq \left ( \bigoplus_{i=0}^{\infty} \X\right)_{p}$ for some $1\leq p<\infty$ or $p=0$. 
 In particular, it is shown that every compact operator on $L_1$ is a commutator. A characterization of the commutators on  
$\ell_{p_1}\oplus\ell_{p_2}\oplus\cdots\oplus\ell_{p_n}$ is given. We also show that strictly singular operators on $\linf$ are commutators.
\end{abstract}

\section{Introduction}
\label{intro}
The commutator of two elements $A$ and $B$ in a Banach algebra is given by
$$
[A,B] = AB - BA.
$$
A natural problem that arises in the study of  derivations on a Banach algebra is to classify the commutators in the algebra.
The first major contribution was due to Wintner(\cite{Wintner}), who proved that the identity in a unital Banach algebra is not a commutator.
This immediately implies that no non-zero multiple of the identity is a commutator and with no effort one can also obtain that no operator of the form 
$\lambda I + K$, where $K$  is a compact operator and $\lambda\neq 0$, is a commutator in the Banach algebra $\opX$ of all bounded linear operators
on the Banach space $\X$. The latter fact can be easily seen just by observing that the 
quotient algebra $\opX / K(\X)$ ($K(\X)$ is the space of compact operators on $\X$) also satisfies  the conditions of the Wintner's theorem. 
Let us note also that instead of considering the ideal of compact operators one can consider any proper ideal in  $\opX$. 
The observations we made are valid in this case as well.\\
For a Banach space $\X$ for which there is a unique maximal proper ideal in $\opX$  (which is the case for the spaces $\LP$ and $\ell_p$ for 
$1\leq p \leq \infty$ and $c_0$) 
one can hope to obtain a complete classification of the commutators on the space.  The natural conjecture is that the only operators on $\X$ that 
are  not commutators are the ones of the form $\lambda I + K$, where $K$  belongs to the unique maximal ideal in $\opX$ and $\lambda\neq 0$. 
In 1965 Brown and Pearcy (\cite{BrownPearcy}) made a breakthrough in this direction by proving that this is in fact a classification of the commutators
on a Hilbert space. Note that if $\X = \ell_p$ ($1\leq p<\infty$) or $\X = c_0$, the ideal of compact operators $K(\X)$ is the largest proper ideal 
in $\mathcal{L}(\X)$(\cite[Theorem 6.2]{Whitley}). Whitley's proof actually shows that the ideal of strictly singular operators is the largest ideal 
in the aforementioned spaces, but as he points out, a result of Feldman, Gohberg and Markus (\cite{GM})
shows that the compact operators are in fact the only closed proper ideal in $L(\X)$ for $\X = \ell_p$ ($1\leq p<\infty$) or $\X = c_0$.
In 1972, Apostol (\cite{Apostol_lp}) proved that every non-commutator on the space $\ell_p$ for $1<p<\infty$ is of the form 
$\lambda I + K$, where $K$ is compact and $\lambda\neq 0$. One year later he proved that the same classification holds in the case of $\X = c_0$ 
(\cite{Apostol_c0}).  While Apostol's approach in (\cite{Apostol_lp}) gave some information about the 
commutators in $\mathcal{L}(\ell_1)$, he was unable to give a complete characterization. His proof uses the fact that the unit vector basis in 
$\ell_p$ for $1<p<\infty$ is shrinking and this does not hold for $\ell_1$. We overcome this obstacle by using the structure of the 
infinite dimensional subspaces of $\ell_1$ rather than just the properties of the basis. \\
In the Section 2 we study spaces $\X$ for which $\displaystyle \X\simeq \left ( \bigoplus_{i=0}^{\infty} \X\right)_{p}$,
$1\leq p<\infty$ or $p=0$, where for $p\geq 1$
$$
\left( \bigoplus_{i=0}^{\infty} \X\right)_{p} := \{f=(f_1, f_{­2},\ldots)\,:\, f_i\in\X,\,i=1,2,\ldots, \,\|f\|^p=\sum_{i=0}^{\infty}\|f_i\|^{p}<\infty \}
$$
(for $p=0$, $\displaystyle \|f\| = \sup_{0\leq i<\infty}\|f_i\|$), and we generally assume
 that $\opX$ has a largest proper ideal. The notion of a decomposition of $\X$ will be introduced and it will be shown how it can be used
to obtain results about commutators on these spaces. In Section 3 we show that the compact operators on $\X$, where $\X$ admits the decomposition
$\displaystyle \X\simeq \left ( \bigoplus_{i=0}^{\infty} \X\right)_{p}$ (in the case $p=1$ we  will assume 
that $\X = L_1$ or $\X = \ell_1$), are commutators and as a corollary we will get that an operator that has a 
compact restriction to a sufficiently large subspace of $\X$ is also a commutator. Section 3 contains our main result - Theorem \ref{mainthm} - which 
shows that the  the only operators on $\ell_1$ that are not commutators are the ones of the form $\lambda I + K$, where $K$  is a compact operator 
and $\lambda\neq 0$. 
In the last section we give a characterization of the commutators on $\ell_{p_1}\oplus\ell_{p_2}\oplus\cdots\oplus\ell_{p_n}$, where 
$1\leq p_n < p_{n-1}< \ldots <p_1 < \infty$ and we also show that the strictly singular operators on $\ell_{\infty}$ are commutators. 

The author thanks W.B. Johnson for useful conversations and suggestions on the subject of this paper.
\section{Notation and basic results} 

We will follow the ideas in Apostol's paper \cite{Apostol_lp}, which in turn extend those of Brown \& Pearcy \cite{BrownPearcy} and earlier work 
referenced in \cite{BrownPearcy}, to develop a base for investigating 
the commutators on $\ell_1$ and $\LP\,(1\leq p <\infty$).\\
Consider a fixed decomposition $\displaystyle \decomp =\{X_i\} $ of $\X$, $1\leq p<\infty$ or $p=0$, meaning 
$\displaystyle \X\simeq \left ( \bigoplus_{i=0}^{\infty} X_i\right)_{p}$, 
where $X_i$ are complemented subspaces of $\X$ which are also isomorphic to $\X$. Let $\psi_i : X_i\to \X$ be an isomorphism
and let us also assume that $\|\psi_i^{-1}\| = 1$ for $i\in\N$ and $\displaystyle \lambda = \sup_{i\in\N}\|\psi_i\| < \infty$.
Denote by $P_i$ the projection from
$\X$ onto $X_i$. Let us also have a collection of uniformly bounded isomorphisms
$\{\varphi_i\}_1^{\infty}$ as shown below 
$$
X_0 \stackrel{\varphi_0}{\longrightarrow} X_1  \stackrel{\varphi_1}{\longrightarrow} 
X_2 \stackrel{\varphi_2}{\longrightarrow} X_3 \stackrel{\varphi_3}{\longrightarrow} \ldots
$$
which make the following diagram commute
$$
\begin{diagram}[height=2em,width=4.5em,abut] 
X_i& \rTo^{\varphi_i} & X_{i+1}\\
& \rdTo_{\psi_i} &\dTo > {\psi_{i+1}} \\
&& \X
\end{diagram}
$$
for every $i\in\N$. Clearly $\varphi_i = \psi_{i+1}^{-1}\circ\psi_i$ and $\|\varphi_i\|\leq\|\psi_i\|\leq\lambda, \,\,\|\varphi^{-1}_i\|\leq\lambda$.
Assume that for $p\geq 1$  we also have $\displaystyle \|\sum_{j=1}^nP_{i_j} x\|^p = \sum_{j=1}^n\|P_{i_j} x\|^p $ for every $n>0$ and $i_1,i_2,\ldots i_n > 0$
with $i_l\neq i_k$ for $l\neq k$. 
Note that using the last condition in the case $p\geq 1$ we have 
$\|P_i\|\leq \|I-P_0\| \leq \|P_0\| + 1 = C_1$. The last inequality is clearly true for $p=0$. 
\noindent
For $x=(x_i)\in \X$ , $x_i\in X_i$ define the following two operators :
$$
\RS_{\decomp} (x) = \sum_{i=0}^{\infty} \varphi_i (x_i) \qquad ,\qquad \LS_{\decomp} (x) = \sum_{i=0}^{\infty} \varphi^{-1}_i (x_{i+1}).
$$
The operators $L_{\decomp}$ and $R_{\decomp}$ are, respectively, the left and the right shift associated with the decomposition $\decomp$. As one may observe, the operators
$\LS_{\decomp}$ and $\RS_{\decomp}$ move the components of $x$ one position to the left/right, respectively, via the isomorphisms $\varphi_i$. Throughout the paper
we will simply use the letters $L$ and $R$ for the shifts when the decomposition $\decomp$ associated with the shifts is clear 
from the context. Our first proposition shows some basic properties of the left and the right shift as well as the fact that all the powers of $L$ and $R$
are uniformly bounded, which will play an important role in the sequel.
\begin{prop}\label{basicprop}
Let $\decomp$ be a decomposition of $\X$. Then we have
\begin{equation}\label{eq:LRbound}
\|L^n\|\leq 2\lambda C_1  \,\,,\,\, \|R^n\|\leq 2\lambda C_1 \,\,\,\textrm{for every}\,\, n = 1,2,\ldots
\end{equation}
\begin{equation}\label{eq:LRidentities}
\LS\RS = I  \,\,,\,\,\RS\LS = I-P_0\,\,,\,\, \RS P_i = P_{i+1}\RS \,\,\, , \,\,\, P_i\LS = \LS P_{i+1} \,\,\, \textrm{for}\,\,\, i\geq 0 .
\end{equation}
\begin{equation}\label{eq:Lgoto0}
\lim_{n\to\infty}\|L^n(x)\|_p = 0 \,\,\textrm{for all}\,\, 1\leq p<\infty \,\,\textrm{and}\,\, p = 0.
\end{equation}
\end{prop}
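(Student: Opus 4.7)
The plan is to handle the three claims in the order (\ref{eq:LRidentities}), (\ref{eq:LRbound}), (\ref{eq:Lgoto0}). The main algebraic input is the telescoping identity
$$
\varphi_{i+n-1}\circ\cdots\circ\varphi_{i} \;=\; \psi_{i+n}^{-1}\circ\psi_{i},
$$
obtained by iterating $\varphi_k=\psi_{k+1}^{-1}\circ\psi_k$, together with the analogous formula for the inverses. Both compositions have operator norm at most $\lambda$, since $\|\psi_j^{-1}\|=1$ and $\|\psi_j\|\le\lambda$.

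For (\ref{eq:LRidentities}) I just chase definitions on $x=\sum_{i\ge 0}x_i$ with $x_i=P_ix\in X_i$. Since $\varphi_i^{-1}\varphi_i$ and $\varphi_i\varphi_i^{-1}$ are the identities of $X_i$ and $X_{i+1}$, one reads off $LRx=\sum_{i\ge 0}x_i=x$ and $RLx=\sum_{i\ge 0}x_{i+1}=(I-P_0)x$. The intertwinings $RP_i=P_{i+1}R$ and $P_iL=LP_{i+1}$ just express that $R$ sends the $i$-th coordinate into the $(i+1)$-st and $L$ does the reverse.

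For (\ref{eq:LRbound}), the telescoping identity gives $\|(R^nx)_{i+n}\|\le\lambda\|x_i\|$ and $\|(L^nx)_i\|\le\lambda\|x_{i+n}\|$. In the case $p\ge 1$, every nonzero coordinate of $R^nx$ sits at an index $\ge n\ge 1$, so the hypothesized additivity for positive indices applies and yields
$$
\|R^nx\|^p\;=\;\sum_{i\ge 0}\|(R^nx)_{i+n}\|^p\;\le\;\lambda^p\bigl(\|P_0x\|^p+\|(I-P_0)x\|^p\bigr)\;\le\;2\lambda^p C_1^p\|x\|^p.
$$
The left shift is slightly more delicate because $L^nx$ has a nonzero $0$-th coordinate which the additivity hypothesis does not cover; I therefore split $\|L^nx\|\le\|P_0L^nx\|+\|(I-P_0)L^nx\|$ and bound each piece by $\lambda C_1\|x\|$ via the same coordinate-by-coordinate estimate. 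The $p=0$ case is immediate from the sup-norm together with $\|P_i\|\le C_1$.

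For (\ref{eq:Lgoto0}), the same coordinatewise bound gives, when $p\ge 1$,
$$
\|P_0L^nx\|\;\le\;\lambda\|x_n\|\qquad\text{and}\qquad\|(I-P_0)L^nx\|^p\;\le\;\lambda^p\sum_{j>n}\|x_j\|^p,
$$
both of which vanish as $n\to\infty$ because the tail of the convergent series $\sum_j\|x_j\|^p$ tends to $0$. In the $p=0$ case one has $\|L^nx\|\le\lambda\sup_{j\ge n}\|x_j\|$, which tends to $0$ provided the $p=0$ direct sum is interpreted as the $c_0$-sum so that coordinate sequences vanish at infinity; this reading is the only real subtlety in what is otherwise a routine verification.
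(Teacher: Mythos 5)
Your proof is correct and follows essentially the same route as the paper: the telescoped composition $\psi_{i+n}^{-1}\circ\psi_i$ with norm at most $\lambda$, the coordinatewise estimates using the $p$-additivity over positive indices (splitting off the $0$-th coordinate for $L^n$), and the tail-of-series argument for the limit, with the sup-norm/$c_0$ reading handling $p=0$ exactly as the paper implicitly does.
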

\begin{proof}
The relations $\LS\RS = I\,,\,\RS\LS = I-P_0$ are clear from the definition of the left and the right shift.
For $x=(x_i)\in \X$ , $x_i\in X_i$ we have 

\begin{eqnarray*}
P_{i+1}\RS (x) &=& P_{i+1}\left ( \sum_{i=0}^{\infty} \varphi_i (x_i)\right ) = \varphi_i(x_i) = \RS P_i (x) \\
P_i\LS (x) &=& P_i\left ( \sum_{i=0}^{\infty} \varphi^{-1}_i (x_{i+1}) \right ) = \varphi^{-1}_i (x_{i+1}) = \LS P_{i+1} (x) .
\end{eqnarray*}
Now using the fact that 

\begin{eqnarray*}
\varphi_{k+n}\circ\varphi_{k+n-1}\circ\cdots\circ\varphi_{k} &=&  \psi_{k+n+1}^{-1}\circ\psi_{k}\\
\varphi_{k}^{-1}\circ\varphi_{k+1}^{-1}\circ\cdots\circ\varphi_{k+n}^{-1} &=&   \psi_{k}^{-1}\circ\psi_{k+n+1} 
\end{eqnarray*}
we deduce that 

\begin{eqnarray*}
R^{n+1}(x) &=& \sum_{i=0}^{\infty} \varphi_{i+n}\circ\varphi_{i+n-1}\circ\cdots\circ\varphi_{i}(x_i) = 
\sum_{i=0}^{\infty}  \psi_{i+n+1}^{-1}\circ\psi_{i} (x_i)\\
L^{n+1}(x) &=& \sum_{i=0}^{\infty} \varphi_{i}^{-1}\circ\varphi_{i+1}^{-1}\circ\cdots\circ\varphi_{i+n}^{-1}(x_{i+n+1}) = 
\sum_{i=0}^{\infty} \psi_{i}^{-1}\circ\psi_{i+n+1}(x_{i+n+1}) 
\end{eqnarray*}
for $n=0,1,\ldots$. Finally we estimate the norms of $\|R^n\|$ and $\|L^n\|$ for $p\geq 1$:

\begin{eqnarray*}
\|R^{n+1}(x)\|^p_p &=& \|\sum_{i=0}^{\infty}  \psi_{i+n+1}^{-1}\circ\psi_{i}(x_i)\|^p\leq \sum_{i=0}^{\infty}\|\psi_{i+n+1}^{-1}\circ\psi_{i}\|^p\|x_i\|^p\\
&\leq& \sum_{i=0}^{\infty}  \lambda^p\|x_i\|^p = \lambda^p\|P_0x\|^p + \lambda^p\|\sum_{i=1}^{\infty} P_ix\|^p \\ \label{eq:Rnormbound}
&=& \lambda^p\|P_0x\|^p + \lambda^p\|(I-P_0)x\|^p\leq 2^p\lambda^p(\|P_0\|+1)^p\|x\|^p\\
\|L^{n+1}(x)\|^p_p &=& \|\psi_{0}^{-1}\circ\psi_{n+1}(x_{n+1}) + \sum_{i=1}^{\infty} \psi_{i}^{-1}\circ\psi_{i+n+1}(x_{i+n+1})\|^p \\
&\leq& 2^p(\|\psi_{0}^{-1}\circ\psi_{n+1}(x_{n+1})\|^p + \| \sum_{i=1}^{\infty} \psi_{i}^{-1}\circ\psi_{i+n+1}(x_{i+n+1})\|^p) \\ 
&\leq& 2^p(\lambda^p\|P_{n+1}x\|^p + \sum_{i=1}^{\infty} \lambda^p\|P_{i+n+1}x\|^p) \leq
2^p\lambda^p \sum_{i=1}^{\infty} \|P_{i}x\|^p \\
&=& \|2\lambda\sum_{i=1}^{\infty} P_{i}x\|^p = 2^p\lambda^p\|(I-P_0)x\|^p \leq 2^p\lambda^p(\|P_0\|+1)^p\|x\|^p.
\end{eqnarray*} 
Note that $\displaystyle \|\LS^m (x)\|^p_p \leq 2^p\lambda^p\|\sum_{i=0}^{\infty} P_{i+m}x\|^p\to 0$ shows (\ref{eq:Lgoto0}) for $p\geq 1$.
In the case $p=0$ the computations are somewhat simpler and are shown below:

\begin{eqnarray*}
\|R^{n+1}(x)\|_{\infty} &=& \|\sum_{i=0}^{\infty}  \psi_{i+n+1}^{-1}\circ\psi_{i}(x_i)\|_{\infty} = \max_{0\leq i<\infty}\|\psi_{i+n+1}^{-1}\circ\psi_{i}(x_i)\|\\
&\leq& \max_{0\leq i<\infty}\|\psi_{i+n+1}^{-1}\circ\psi_{i}\|\|x_i\| \leq \lambda \max_{0\leq i<\infty} \|x_i\| = \lambda\|x\|_{\infty}\\
\|L^{n+1}(x)\|_{\infty} &=& \|\sum_{i=0}^{\infty} \psi_{i}^{-1}\circ\psi_{i+n+1}(x_{i+n+1})\|_{\infty} \\
&=& \max_{0\leq i<\infty}\|\psi_{i}^{-1}\circ\psi_{i+n+1}(x_{i+n+1})\| \\
&\leq& \max_{0\leq i<\infty}\|\psi_{i}^{-1}\circ\psi_{i+n+1}\|\|x_{i+n+1}\|\leq \lambda \max_{0\leq i<\infty}\|x_{i+n+1}\|\\
&\leq&\lambda\|x\|_{\infty}.
\end{eqnarray*} 
In this case $\displaystyle \|\LS^m (x)\|_0\leq \lambda \max_{0\leq i<\infty}\|x_{i+n+1}\|\to 0$ proves (\ref{eq:Lgoto0}) for $p=0$. 
\end{proof}
Denote by $D_S$ the inner derivation determined by $S$ in $\opX$ i.e.
$$
D_ST = ST - TS .
$$
In the notation introduced above, an operator $T\in \opX$ is a commutator if and only if there exists $S\in \opX$ such that $T\in D_S\opX$.
For a given decomposition $\decomp$ of $\X$ denote by $\AD$ the set
\begin{equation}\label{eq:ADDef}
\AD = \{T\in \opX : \sum_{n=0}^{\infty} \RS^nT\LS^n \,\,\,\textrm{is strongly convergent}\}. 
\end{equation}
For $T\in\AD$ we  define
$$
\TD = \sum_{n=0}^{\infty} \RS^nT\LS^n .
$$
Our next lemma shows that each operator $T\in\AD$ is a commutator and also gives an explicit expression for $T$ as the commutator of two operators.
\begin{lem}\label{EasyLemma}
Let $T\in\AD$ for some decomposition $\decomp = \{X_i\}$ of $\X$. Then we have
\begin{equation}\label{eq:TinADRepresentation}
T = D_{\LS}(\RS\TD) = -D_{\RS}(\TD\LS) .
\end{equation}
\end{lem}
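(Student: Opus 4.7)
The plan is to verify both equalities by direct computation, using the identities $LR = I$ and $RL = I - P_0$ from Proposition \ref{basicprop} together with a telescoping of the series defining $T_\decomp$.

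First I would establish the key auxiliary identity $R T_\decomp L = T_\decomp - T$, where equality is understood in the strong operator sense. For a fixed $x \in \X$, since $R$ is bounded (and hence continuous for the norm topology), I can pull $R$ inside the strong limit:
$$
R T_\decomp L x \;=\; R\lim_{N\to\infty}\sum_{n=0}^{N} R^n T L^n (Lx) \;=\; \lim_{N\to\infty}\sum_{n=0}^{N} R^{n+1} T L^{n+1} x \;=\; \sum_{m=1}^{\infty} R^m T L^m x,
$$
and the last sum equals $T_\decomp x - T x$ because the full series $\sum_{m=0}^\infty R^m T L^m x$ converges by hypothesis. Thus $R T_\decomp L = T_\decomp - T$.

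Next I would plug this into the definition of the inner derivations. For the first equality, using $LR = I$ from (\ref{eq:LRidentities}),
$$
D_L(R T_\decomp) \;=\; L R T_\decomp - R T_\decomp L \;=\; T_\decomp - (T_\decomp - T) \;=\; T.
$$
For the second equality, I would observe that $T_\decomp L R = T_\decomp$ (again by $LR = I$), and compute
$$
-D_R(T_\decomp L) \;=\; T_\decomp L R - R T_\decomp L \;=\; T_\decomp - (T_\decomp - T) \;=\; T.
$$

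The argument is essentially a telescoping trick, so the only point that requires any care is the justification for moving the bounded operator $R$ past the strong limit defining $T_\decomp$; this step, however, is immediate from the definition of strong convergence together with boundedness of $R$, so I do not expect any substantive obstacle.
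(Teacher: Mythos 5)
Your proposal is correct and follows essentially the same route as the paper: the paper also computes $D_{\LS}(\RS\TD) = \TD - \RS\bigl(\sum_{n=0}^{\infty}\RS^nT\LS^n\bigr)\LS = \TD - \sum_{n=1}^{\infty}\RS^nT\LS^n = T$, which is exactly your telescoping identity $\RS\TD\LS = \TD - T$ combined with $\LS\RS = I$. Your explicit justification for passing the bounded operator $\RS$ through the strong limit is a welcome (if routine) extra detail that the paper leaves implicit.
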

\begin{proof}
We will show one of the equalities via direct computation. The proof of the other is similar.

\begin{eqnarray*}
D_{\LS}(\RS\TD) &=& \LS\RS\TD - \RS\TD\LS = \TD - \RS(\sum_{n=0}^{\infty} \RS^nT\LS^n)\LS \\
&=& \TD - \sum_{n=1}^{\infty} \RS^nT\LS^n = T .
\end{eqnarray*}
In the computation above we used the convention $\LS^0 = \RS^0 = I$.
\end{proof}
\begin{lem}\label{EasyLemma2}
For a decomposition $\decomp = \{X_i\}$ of $\X$ we have the following relations
$$
\AD = D_{\RS}(\opX\RS\LS) = D_{\LS}(\RS\LS\opX) .
$$
\end{lem}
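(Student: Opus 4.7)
My plan is to prove each of the two claimed equalities by a pair of inclusions, leveraging Lemma \ref{EasyLemma} for the forward direction and a telescoping argument for the reverse. The two ingredients I will repeatedly use are the identity $\RS\LS = I - P_0$ from Proposition \ref{basicprop} and the auxiliary identities $\LS P_0 = 0$ and $P_0\RS = 0$; these are immediate from the definitions, since $\RS$ has range in $\bigoplus_{i\geq 1} X_i$ and $\LS$ annihilates $X_0$. Combining them with the idempotence of $P_0$ gives the convenient reformulations
$$
\opX\RS\LS = \{V\in\opX : VP_0 = 0\},\qquad \RS\LS\opX = \{W\in\opX : P_0 W = 0\}.
$$

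For the forward inclusion $\AD\subseteq D_{\RS}(\opX\RS\LS)$, I take $T\in\AD$ and apply Lemma \ref{EasyLemma} to write $T = -D_{\RS}(\TD\LS)$. Since $\LS P_0 = 0$, the operator $V:=\TD\LS$ satisfies $VP_0 = 0$, so $V\in\opX\RS\LS$ and therefore $T = D_{\RS}(-V)\in D_{\RS}(\opX\RS\LS)$. The analogous inclusion $\AD\subseteq D_{\LS}(\RS\LS\opX)$ follows from $T = D_{\LS}(\RS\TD)$ and $P_0\RS = 0$.

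For the reverse inclusion $D_{\RS}(\opX\RS\LS)\subseteq \AD$, I take $V\in\opX$ with $VP_0 = 0$ and set $T=D_{\RS}(V)=\RS V - V\RS$. Writing
$$
\RS^n T\LS^n = \RS^{n+1} V \LS^n - \RS^n V \RS\LS^n
$$
and using $\RS\LS^n = (I - P_0)\LS^{n-1}$ for $n\geq 1$, which together with $VP_0 = 0$ gives $V\RS\LS^n = V\LS^{n-1}$, the partial sums telescope to
$$
\sum_{n=0}^N \RS^n T\LS^n = -V\RS + \RS^{N+1} V \LS^N.
$$
By \eqref{eq:LRbound} the powers $\{\|\RS^n\|\}$ are uniformly bounded, and by \eqref{eq:Lgoto0} we have $\|\LS^N x\|\to 0$ for every $x$, so the remainder tends to $0$ strongly. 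Hence $T\in\AD$ with $\TD = -V\RS$. The inclusion $D_{\LS}(\RS\LS\opX)\subseteq\AD$ is handled symmetrically, this time using $\RS^n\LS W = \RS^{n-1} W$ for $n\geq 1$ when $P_0 W = 0$, which again collapses the sum.

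The only step that requires a bit of care is the bookkeeping identification of $\opX\RS\LS$ and $\RS\LS\opX$ with the right and left annihilators of $P_0$; this is what makes the telescoping close up, because without the one-sided vanishing on $P_0$ neither the products $V\RS\LS^n$ nor $\RS^n\LS W$ simplify. After that identification the rest is mechanical and is governed entirely by the uniform bound and the strong convergence $\LS^N\to 0$ from Proposition \ref{basicprop}, so I do not anticipate any further obstacle.
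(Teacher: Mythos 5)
Your proof is correct and follows essentially the same route as the paper: the forward inclusions via Lemma \ref{EasyLemma}, and the reverse inclusions via the telescoping of $\sum_{n}\RS^nD_{\RS}(V)\LS^n$ using the uniform bound \eqref{eq:LRbound} and the strong convergence \eqref{eq:Lgoto0}. Your identification of $\opX\RS\LS$ and $\RS\LS\opX$ with the right and left annihilators of $P_0$ is just a cosmetic repackaging of the paper's direct use of $T=S\RS\LS$ and $\LS\RS=I$.
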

\begin{proof}
We will show the first of the relations. The proof of the second, as one may expect, is similar.\\
If $T\in\AD$, then $\TD\LS = \TD\LS\RS\LS = (\TD\LS)\RS\LS \in \opX\RS\LS$. Then using 
$T = -D_{\RS}(\TD\LS)$ from (\ref{eq:TinADRepresentation}) we have $T\in D_{\RS}(\opX\RS\LS)$.\\
Now, to prove the other direction, assume that $T\in \opX\RS\LS$. Then $T = S\RS\LS$ for some operator $S$ (hence $T\RS = S\RS$).
Then 

\begin{eqnarray*}
\sum_{n=0}^{m} \RS^n(D_{\RS}T)\LS^n &=&  \sum_{n=0}^{m} \RS^n(\RS T - T\RS)\LS^n = \sum_{n=0}^{m} \RS^{n+1}T\LS^n - \sum_{n=0}^{m} \RS^{n}T\RS\LS^n  \\
&=& \sum_{n=0}^{m} \RS^{n+1}S\RS\LS\LS^n -  \sum_{n=0}^{m} \RS^{n}S\RS\LS\RS\LS^n \\ 
&=& \sum_{n=0}^{m} \RS^{n+1}S\RS\LS^{n+1} -  \sum_{n=0}^{m} \RS^{n}S\RS\LS^n  \\
&=& \RS^{m+1}S\RS\LS^{m+1} -S\RS = \RS^{m+1}T\LS^{m} - T\RS .
\end{eqnarray*}
Since $\displaystyle \lim_{m\to\infty}\|\LS^m (x)\| = 0$ for any $x\in\X$ from (\ref{eq:Lgoto0}) and $\|R^m\|<2\lambda C_1$ for every $m>0$,  
we have \\
$\displaystyle \lim_{m\to\infty}\sum_{n=0}^{m} \RS^n(D_{\RS}T)\LS^n = -T\RS$. From the last equation we conclude that 
$D_{\RS}T\in \AD$ and $(D_{\RS}T)_{\decomp} = -T\RS$.  Moreover, from $T\RS = S\RS$ we have $(D_{\RS}T)_{\decomp} = -S\RS$ and multiplying both sides 
by $\LS$ we obtain $(D_{\RS}T)_{\decomp}\LS = -T$.
\end{proof}
We proved that for a given decomposition $\decomp$ all operators $T\in\AD$ are commutators, but in general the condition in $(\ref{eq:ADDef})$ 
is hard to check for a given operator $T$. We want to have a
condition on $T$ which is easy to check and which ensures the containment $T\in\AD$. To be more precise, given an operator $T$,
we want to have a condition on $T$ which will allow us to build a decomposition $\decomp$ for which $T\in\AD$. Our next lemma gives us such a condition (as 
will become clear later) and it will be our main tool for constructing decompositions in the sequel.

\begin{lem}\label{MainLemma}
Let $T\in\opX$ and  $\decomp = \{X_i\}$ be a decomposition of $\X$. Fix $\varepsilon >0$ and denote
$\displaystyle \widetilde{P}_n = \sum_{i=0}^{n} P_i$, where $P_i$ is the projection onto $X_i$. Let us also assume that 
\begin{equation}\label{eq:maincondition}
\lim_{n\to\infty}\|(I - \widetilde{P}_n)T\| = \lim_{n\to\infty}\|T(I - \widetilde{P}_n)\| = 0 .
\end{equation}
Then there exists an increasing sequence of integers $\displaystyle \{m_j\}_{i=0}^{\infty}$ such that
$$
\sum_{j=0}^{\infty}\|(I-\widetilde{P}_{m_j})T\| + \sum_{j=0}^{\infty}\|T(I-\widetilde{P}_{m_j})\| 
+ \sum_{i,j=0}^{\infty}\|(I-\widetilde{P}_{m_i})T(I-\widetilde{P}_{m_j})\| < \varepsilon .
$$
\end{lem}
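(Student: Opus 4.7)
The plan is to select $m_0 < m_1 < \cdots$ one at a time, each far enough out that the next tail is small, arranged so that the controlling sequence of tail norms decays geometrically. Fix a parameter $\eta > 0$ (to be tuned at the end), set $\delta_j := \eta\, 2^{-j}$, and use the hypothesis (\ref{eq:maincondition}) to choose $m_j$ inductively (with $m_j > m_{j-1}$) satisfying $\|(I-\widetilde{P}_{m_j})T\| < \delta_j$ and $\|T(I-\widetilde{P}_{m_j})\| < \delta_j$; this is possible because both norms tend to $0$ as the index tends to infinity.

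Before estimating the three sums, I would record the uniform bound $M := \sup_{n} \|I-\widetilde{P}_n\| < \infty$. When $p \geq 1$, the disjoint $p$-additivity hypothesis applied to the indices $i > n \geq 0$ gives $\|(I-\widetilde{P}_n)x\|^p = \sum_{i>n}\|P_i x\|^p \leq \|(I-P_0)x\|^p$, so $M \leq C_1$; for $p=0$ the same bound is immediate from the supremum formula. Given this, the two single sums are harmless: each is dominated by $\sum_{j=0}^{\infty} \delta_j = 2\eta$.

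The double sum is the only place that requires any care. Submultiplicativity, together with the inductive choices, gives
$$
\|(I-\widetilde{P}_{m_i})T(I-\widetilde{P}_{m_j})\| \leq M\min\bigl\{\|(I-\widetilde{P}_{m_i})T\|,\,\|T(I-\widetilde{P}_{m_j})\|\bigr\} \leq M\,\delta_{\max(i,j)},
$$
using that $\{\delta_j\}$ is decreasing. Since there are exactly $2k+1$ pairs $(i,j)$ with $\max(i,j)=k$,
$$
\sum_{i,j=0}^{\infty}\|(I-\widetilde{P}_{m_i})T(I-\widetilde{P}_{m_j})\| \leq M\eta\sum_{k=0}^{\infty}(2k+1)\,2^{-k} = 6M\eta.
$$
Combining all three estimates, the total is at most $(4+6M)\eta$, so choosing $\eta < \varepsilon/(4+6M)$ finishes the proof. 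The one genuine design choice is the decay rate of $\delta_j$: the cross term requires $\sum_k k\delta_k$ to converge, which forces strictly faster than $1/j^2$ decay, and geometric decay is the cleanest way to secure this. No step is a real obstacle; the argument is otherwise routine bookkeeping.
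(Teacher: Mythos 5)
Your proof is correct. The ingredients overlap with the paper's (pass to a sparse subsequence of indices, use the uniform bound $\|I-\widetilde{P}_n\|\leq C_1$, and submultiplicativity of the operator norm), but your handling of the double sum is genuinely more direct. The paper makes a two-stage selection: first a sequence $\{n_j\}$ making the two single sums small, then a second refinement $\{m_j\}$ with $m_j\geq n_j$, obtained from the limit relation $\lim_{i\to\infty}\sum_{j}\|(I-\widetilde{P}_{i})T(I-\widetilde{P}_{n_j})\|=0$, and finally transfers all estimates from $n_j$ to $m_j$ via the identity $(I-\widetilde{P}_{n_j})(I-\widetilde{P}_{m_j})=I-\widetilde{P}_{m_j}$. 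You instead prescribe the decay rate $\delta_j=\eta 2^{-j}$ at the outset and observe that the double sum is then automatically controlled, since $\|(I-\widetilde{P}_{m_i})T(I-\widetilde{P}_{m_j})\|\leq M\,\delta_{\max(i,j)}$ and $\sum_k(2k+1)\delta_k<\infty$; this collapses the proof to a single inductive choice and makes explicit the only real requirement, namely that $\sum_k k\,\delta_k$ converge. Your one-line justification of $M\leq C_1$ (disjoint $p$-additivity for indices $i>n\geq 0$, plus the trivial case $p=0$) matches the bound the paper asserts, with the tacit use of strong convergence of $\sum_i P_i$ to $I$ to pass to the infinite sum, which is part of the decomposition setup. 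What the paper's longer route buys is essentially nothing here beyond constants phrased in terms of $C_1$; your argument yields the same conclusion with less bookkeeping, and it also isolates clearly why a merely summable choice of $\delta_j$ (e.g.\ $1/j^2$) would not quite suffice for the cross terms.
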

\begin{proof}
Note first that $\|I-\widetilde{P}_{i}\| \leq \|P_0\| + 1 = C_1$ for every $i\in\N$.
Let $\displaystyle \{n_j\}_{j=0}^{\infty}$ be an increasing sequence of integers such that
$$
\sum_{j=0}^{\infty}\|T(I-\widetilde{P}_{n_j})\| < \frac{\varepsilon}{3C_1}\,\, , 
\,\, \sum_{j=0}^{\infty}\|(I-\widetilde{P}_{n_j})T\| < \frac{\varepsilon}{3C_1}.
$$
Now we can use the inequality
$$
\sum_{j = 0}^{\infty}\|(I-\widetilde{P}_{i})T(I-\widetilde{P}_{n_j})\| \leq \sum_{j = 0}^{m}\|(I-\widetilde{P}_{i})T(I-\widetilde{P}_{n_j})\|  + 
C_1\sum_{j = m+1}^{\infty}\|T(I-\widetilde{P}_{n_j})\| 
$$
to deduce that 
$$
\lim_{i\to\infty}\sum_{j=0}^{\infty}\|(I-\widetilde{P}_{i})T(I-\widetilde{P}_{n_j})\| = 0 .
$$
Using the last equation we can find an increasing sequence of integers \\
$\displaystyle \{m_j\}_{j=0}^{\infty},\, m_j \geq n_j$ such that 
$$
\sum_{i=0}^{\infty}\sum_{j=0}^{\infty}\|(I-\widetilde{P}_{m_i})T(I-\widetilde{P}_{n_j})\| < \frac{\varepsilon}{3C_1} .
$$
Now it is easy to deduce that the sequence $\displaystyle \{m_j\}_{j=0}^{\infty}$ satisfies the condition of the lemma. In fact

\begin{eqnarray*}
\|T(I-\widetilde{P}_{m_j})\| &=& \|T(I-\widetilde{P}_{n_j})(I-\widetilde{P}_{m_j})\|\leq C_1\|T(I-\widetilde{P}_{n_j})\| \\
\|(I-\widetilde{P}_{m_j})T\| &=& \|(I-\widetilde{P}_{m_j})(I-\widetilde{P}_{n_j})T\|\leq C_1 \|(I-\widetilde{P}_{n_j})T\| \\
\|(I-\widetilde{P}_{m_i})T(I-\widetilde{P}_{m_j})\| &=& \|(I-\widetilde{P}_{m_i})T(I-\widetilde{P}_{n_j})(I-\widetilde{P}_{m_j})\|\\
&\leq& C_1\|(I-\widetilde{P}_{m_i})T(I-\widetilde{P}_{n_j})\| .
\end{eqnarray*}
This finishes the proof.
\end{proof}

\begin{lem}\label{PTPLemma}
Let $\decomp = \{X_i\}$ be a decomposition of $\X$. Then for any $T\in\opX$ we have
$$
P_iTP_j\in\AD \, , \, \|(P_iTP_j)_\decomp\|\leq C\|P_iTP_j\|
$$
where $C$ depends on $\decomp$ only.
\end{lem}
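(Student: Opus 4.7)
The plan is to exploit the commutation identities from Proposition \ref{basicprop} together with the fact that the images of $R^n(P_iTP_j)L^n$ land in pairwise distinct summands of the decomposition, so the partial sums behave like a single $\ell_p$ (or $c_0$) combination, and the bound follows directly from $\|P_{j+n}x\| \to 0$.

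First, I would iterate the identities $R P_k = P_{k+1}R$ and $P_k L = L P_{k+1}$ to get $R^n P_k = P_{k+n}R^n$ and $P_k L^n = L^n P_{k+n}$. Applied to our operator, and writing $L^n P_{j+n} x$ in $X_j$ via the explicit formula $L^n P_{j+n}x = \psi_j^{-1}\circ\psi_{j+n}(P_{j+n}x)$, I would compute
\begin{equation*}
R^n(P_iTP_j)L^n x \;=\; P_{i+n} R^n (P_i T P_j) L^n P_{j+n} x \;\in\; X_{i+n}.
\end{equation*}
Using $\|R^n\| \le 2\lambda C_1$, $\|\psi_j^{-1}\|=1$ and $\|\psi_{j+n}\|\le\lambda$, this gives the key pointwise estimate
\begin{equation*}
\|R^n(P_iTP_j)L^n x\| \;\le\; 2\lambda^2 C_1\,\|P_iTP_j\|\,\|P_{j+n}x\|.
\end{equation*}

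Next, since the $n$-th term of the series lives in $X_{i+n}$ and these indices are distinct, the hypothesis on the decomposition gives (with at most one extra factor of $2$ to absorb a possible $X_0$ term when $i=0$)
\begin{equation*}
\Bigl\|\sum_{n=N}^{M} R^n(P_iTP_j)L^n x\Bigr\|^p \;\le\; 2^p\sum_{n=N}^{M}\|R^n(P_iTP_j)L^n x\|^p \;\le\; C'\|P_iTP_j\|^p\sum_{n=N}^{M}\|P_{j+n}x\|^p
\end{equation*}
for $p\ge 1$, and the analogous sup-estimate for $p=0$. For $p\ge 1$ we have $\sum_{n=0}^{\infty}\|P_{j+n}x\|^p \le C_1^p\|x\|^p$, so the partial sums form a Cauchy sequence and we obtain both strong convergence and the bound $\|(P_iTP_j)_\decomp x\|\le C\|P_iTP_j\|\|x\|$. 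For $p=0$, the analogous bound uses $\|P_{j+n}x\|\to 0$, which yields convergence of the partial sums in the $c_0$-sense and the same type of norm estimate.

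There is no serious obstacle here: the identities of Proposition \ref{basicprop} force each term of the defining series to land in a different summand $X_{i+n}$, and the factor $\|P_{j+n}x\|$ extracted from $L^n P_{j+n}x$ tends to $0$ fast enough (summably in $\ell_p$-sense, or to zero in $c_0$-sense) to drive the tails to zero. The constant $C$ that emerges depends only on $\lambda$ and $C_1$, which are attached to $\decomp$, as required.
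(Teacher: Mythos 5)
Your proof is correct and follows essentially the same route as the paper's: the commutation identities reduce each term to $P_{i+n}R^n(P_iTP_j)L^nP_{j+n}x$, the $\ell_p$-additivity of disjoint summands (or the sup-norm for $p=0$) converts the partial sums into $\sum_n\|P_{j+n}x\|^p$, and the vanishing tails give both strong convergence and the bound with $C$ depending only on $\lambda$ and $C_1$. Your explicit handling of the possible $X_0$ term with an extra factor of $2$ is a minor refinement the paper glosses over, but not a different argument.
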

\begin{proof}
Let us consider the case $p\geq 1$ first.
Note first that $\|L^n\|\leq 2\lambda C_1$ and $\|R^n\|\leq 2\lambda C_1$. For any $x\in\X$  we have (see Prop. (\ref{basicprop}))

\begin{eqnarray*}
\|\sum_{n=m}^{m+r} \RS^{n}P_iTP_j\LS^nx\|^p &=&  \|\sum_{n=m}^{m+r} \RS^{n}P_iTP_j\LS^nP_{j+n}x\|^p  \\
&\leq& 4\lambda^2C_1^2\|P_iTP_j\|^p\sum_{n=m}^{m+r}\|P_{j+n}x\|^p \\
&\leq&  4\lambda^2C_1^2\|P_iTP_j\|^p\sum_{n=m}^{\infty}\|P_{j+n}x\|^p  \\
&\leq& 4\lambda^2C_1^3\|P_iTP_j\|^p\|x\|^p .
\end{eqnarray*}
Since $\displaystyle \sum_{n=m}^{\infty}\|P_{j+n}x\|_p^p\to 0$ as $m\to\infty$ we have that 
$\displaystyle \sum_{n=0}^{\infty} \RS^{n}P_iTP_j\LS^n$ is strongly convergent and $P_iTP_j\in\AD$. The inequality in the Theorem in this case
 follows from the inequality above with $C = 4\lambda^2C_1^3$.\\
For $p=0$ a similar calculation shows 

\begin{eqnarray*}
\|\sum_{n=m}^{m+r} \RS^{n}P_iTP_j\LS^nx\|_{\infty} &=&  \|\sum_{n=m}^{m+r} \RS^{n}P_iTP_j\LS^nP_{j+n}x\|_{\infty}  \\
&=& \max_{m\leq n\leq m+r} \|\RS^{n}P_iTP_j\LS^nP_{j+n}x\|\\
&\leq&  4\lambda^2C_1^2\|P_iTP_j\|\max_{m\leq n\leq m+r}\|P_{j+n}x\|.  
\end{eqnarray*}
Since $\displaystyle \max_{m\leq n\leq m+r}\|P_{j+n}x\|\to 0$ as $m\to\infty$  we apply the same argument as in the case $p\geq 1$
 to obtain the conclusion of the theorem. 
\end{proof}
\begin{cor} \label{largekernel}
Let $T\in\opX$ and $\decomp = \{X_i\}$ be a decomposition of $\X$. Then we have

\begin{eqnarray*}
TP_0 &=& D_{\RS}(\LS TP_0 - (P_0TP_0)_{\decomp}\LS)\\
P_0T &=& D_{\LS}(-P_0T\RS + \RS(P_0TP_0)_{\decomp}) .
\end{eqnarray*}
\end{cor}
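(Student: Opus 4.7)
The plan is to prove the corollary by direct bookkeeping on top of the identities from Proposition~\ref{basicprop}, Lemma~\ref{EasyLemma}, and Lemma~\ref{PTPLemma}. Specifically, I would expand each claimed right-hand side using the definition $D_S X = SX - XS$, simplify via the shift identities, and then absorb the piece involving $(P_0TP_0)_{\decomp}$ using Lemma~\ref{EasyLemma} applied to the operator $P_0TP_0$, which lies in $\AD$ thanks to Lemma~\ref{PTPLemma}.

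Before starting the computation, I would record two auxiliary vanishings that are not listed explicitly in Proposition~\ref{basicprop} but follow immediately from the definitions of the shifts: $P_0\RS = 0$ and $\LS P_0 = 0$. The first holds because $\RS x = \sum_{i\geq 0}\varphi_i(x_i)$ lies in $X_1\oplus X_2\oplus\cdots$, since $\varphi_i:X_i\to X_{i+1}$; the second holds because $\LS x = \sum_{i\geq 0}\varphi_i^{-1}(x_{i+1})$ does not see the $X_0$-component of $x$. Equivalently, $P_0\RS = P_0\RS\sum_{i\geq 0}P_i = \sum_{i\geq 0}P_0P_{i+1}\RS = 0$ from $\RS P_i = P_{i+1}\RS$, and analogously for $\LS P_0$.

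With these in hand, the first identity reduces to two short lines. Expanding,
$$
D_{\RS}(\LS TP_0) = \RS\LS\,TP_0 - \LS TP_0\RS = (I-P_0)TP_0 - 0 = TP_0 - P_0TP_0,
$$
using $\RS\LS = I-P_0$ and $P_0\RS = 0$. On the other hand, Lemma~\ref{EasyLemma} applied to $P_0TP_0\in\AD$ gives $-D_{\RS}((P_0TP_0)_{\decomp}\LS) = P_0TP_0$, that is, $D_{\RS}((P_0TP_0)_{\decomp}\LS) = -P_0TP_0$. Subtracting the two relations produces exactly $TP_0$, which is what the first identity asserts.

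The second identity is entirely parallel. Using $\LS P_0 = 0$ and $\RS\LS = I-P_0$, one gets
$$
D_{\LS}(-P_0T\RS) = -\LS P_0T\RS + P_0T\RS\LS = P_0T(I-P_0) = P_0T - P_0TP_0,
$$
and Lemma~\ref{EasyLemma} yields $D_{\LS}(\RS(P_0TP_0)_{\decomp}) = P_0TP_0$; adding them gives $P_0T$. There is no real obstacle in this argument---the whole verification is purely algebraic, and the only thing to get right is the bookkeeping of signs and the correct invocation of $\TD = D_{\LS}(\RS\TD) = -D_{\RS}(\TD\LS)$ for the operator $P_0TP_0$.
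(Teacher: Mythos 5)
Your argument is correct and is essentially the paper's own proof: expand $D_{\RS}$ and $D_{\LS}$, use $\RS\LS = I-P_0$ together with $P_0\RS = 0 = \LS P_0$, and absorb the $P_0TP_0$ term via Lemma~\ref{EasyLemma} (with membership $P_0TP_0\in\AD$ guaranteed by Lemma~\ref{PTPLemma}, which the paper leaves implicit). Your explicit justification of $P_0\RS=0=\LS P_0$ is a harmless addition, not a different route.
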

\begin{proof}
We will prove the first equation. Note that from Lemma \ref{EasyLemma} we have $-D_{\RS}((P_0TP_0)_{\decomp}\LS) = P_0TP_0$ and
$D_{\LS}(\RS(P_0TP_0)_{\decomp}) =  P_0TP_0$.
Now 

\begin{eqnarray*}
D_{\RS}(\LS TP_0 - (P_0TP_0)_{\decomp}\LS) &=& \RS\LS TP_0 - \LS TP_0\RS + P_0TP_0  \\
&=&(I-P_0) TP_0  + P_0TP_0 = TP_0\\
D_{\LS}(-P_0T\RS + \RS(P_0TP_0) &=&-\LS P_0T\RS + P_0T\RS\LS + P_0TP_0  \\
&=&P_0T(I - P_0) + P_0TP_0 =P_0T .
\end{eqnarray*}
Above we used the equality $P_0\RS = 0 = \LS P_0$, which is clear from the definitions of $R$ and $L$.
\end{proof}
The following theorem shows the importance of the decompositions in determining whether an operator is a commutator.
\begin{thm}\label{thm:decomp}
Under the hypotheses of Lemma \ref{MainLemma}, there is a decomposition $\decomp$ of $\X$ for which
$$
T\in\AD ,\,\,\, \|T_{\decomp}\| \leq C\|T\| + \varepsilon
$$
where $C$ depends on $\decomp$ only. In particular, using Lemma \ref{EasyLemma} we conclude that $T$ is a commutator.
\end{thm}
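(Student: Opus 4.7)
The plan is to build the required decomposition $\decomp$ by aggregating the blocks of the given decomposition according to the sequence produced by Lemma~\ref{MainLemma}, and then to show $T\in\AD$ by exhibiting $T$ as an absolutely summable series of block operators $Q_iTQ_j$, each of which already lies in $\AD$ by Lemma~\ref{PTPLemma}.

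First, apply Lemma~\ref{MainLemma} with an auxiliary tolerance $\varepsilon'>0$ (to be fixed at the end) to obtain an increasing sequence $\{m_j\}_{j=0}^{\infty}$. Define new blocks $Y_0=X_0\oplus\cdots\oplus X_{m_0}$ and $Y_j=X_{m_{j-1}+1}\oplus\cdots\oplus X_{m_j}$ for $j\geq 1$. Since $\X\simeq\left(\bigoplus_{i=0}^{\infty}\X\right)_p$, a standard Pelczy\'nski-type argument produces isomorphisms $Y_j\to\X$ with constants uniformly controlled by those of the original decomposition, so $\decomp:=\{Y_j\}$ is a valid decomposition in the sense of Section~2. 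Let $Q_j$ denote the associated projections; by construction $\widetilde{Q}_j:=\sum_{i=0}^{j}Q_i=\widetilde{P}_{m_j}$, which is the identity that links the new projections to the hypotheses of Lemma~\ref{MainLemma}.

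The key estimate is $\sum_{i,j}\|Q_iTQ_j\|\leq C_1^2\|T\|+3C_1^2\varepsilon'$. Indeed, for $i,j\geq 1$ the identities $Q_i=(I-\widetilde{Q}_{i-1})Q_i$ and $Q_j=Q_j(I-\widetilde{Q}_{j-1})$ let one factor $Q_iTQ_j=Q_i(I-\widetilde{P}_{m_{i-1}})T(I-\widetilde{P}_{m_{j-1}})Q_j$, whence $\|Q_iTQ_j\|\leq C_1^2\|(I-\widetilde{P}_{m_{i-1}})T(I-\widetilde{P}_{m_{j-1}})\|$; analogous single-projection bounds handle the $i=0$ or $j=0$ cases, while $\|Q_0TQ_0\|\leq C_1^2\|T\|$ is trivial. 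Summing over all $(i,j)$ and invoking Lemma~\ref{MainLemma} produces the claimed bound.

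By Lemma~\ref{PTPLemma} each $Q_iTQ_j\in\AD$; inspecting its proof actually delivers the stronger uniform bound $\|\sum_{n=0}^{N}R^nQ_iTQ_jL^n\|\leq C\|Q_iTQ_j\|$ for every $N$. Fix $x\in\X$. Because $\sum_{i,j}Q_iTQ_j$ converges to $T$ in operator norm, for any finite $N$ one has
\[
\sum_{n=0}^{N}R^nTL^nx=\sum_{i,j}\sum_{n=0}^{N}R^nQ_iTQ_jL^nx,
\]
the swap being legal since the $(i,j)$-series converges unconditionally in $\X$. As $N\to\infty$ the inner sum converges to $(Q_iTQ_j)_{\decomp}x$ and is bounded in norm by $C\|Q_iTQ_j\|\|x\|$, furnishing a summable majorant over $(i,j)$. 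Dominated convergence on $\N^2$ then gives $T\in\AD$ with $T_{\decomp}x=\sum_{i,j}(Q_iTQ_j)_{\decomp}x$, hence $\|T_{\decomp}\|\leq C\sum_{i,j}\|Q_iTQ_j\|\leq CC_1^2\|T\|+3CC_1^2\varepsilon'$; choosing $\varepsilon'=\varepsilon/(3CC_1^2)$ at the outset yields the advertised bound. The main obstacles I expect are (a) securing uniform isomorphism constants for the aggregated blocks $Y_j$, which is the point where $\X\simeq\left(\bigoplus\X\right)_p$ is genuinely used, and (b) extracting the \emph{uniform} partial-sum bound from the proof of Lemma~\ref{PTPLemma}, which is what allows dominated convergence to close the loop.
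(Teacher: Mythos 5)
Your proposal is correct, and its core is the same as the paper's: you aggregate the blocks along the sequence $\{m_j\}$ from Lemma \ref{MainLemma} to get a new decomposition, prove the absolute summability $\sum_{i,j}\|Q_iTQ_j\|\leq C\|T\|+C\varepsilon'$ using $Q_i=(I-\widetilde{Q}_{i-1})Q_i$, and feed the blocks into Lemma \ref{PTPLemma}. Where you diverge is the finish: the paper defines $S=\sum_{i,j}(Q_iTQ_j)_{\decomp}$, gets $T=D_{\RS}(-S\LS)$ by summing the identities of Lemma \ref{EasyLemma}, concludes $T\in\AD$ from Lemma \ref{EasyLemma2} since $S\LS=(S\LS)\RS\LS\in\opX\RS\LS$, and then identifies $T_{\decomp}=S$ from $D_{\RS}(T_{\decomp}\LS-S\LS)=0$; you instead prove strong convergence of $\sum_n \RS^nT\LS^n$ directly, interchanging the $(i,j)$-sum with the finite $n$-sum and passing to the limit by dominated convergence, using the uniform partial-sum bound $\|\sum_{n=0}^{N}\RS^nQ_iTQ_j\LS^nx\|\leq C\|Q_iTQ_j\|\,\|x\|$ which is indeed what the proof of Lemma \ref{PTPLemma} actually establishes. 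Your route is more hands-on analytic and avoids invoking the algebraic machinery of Lemma \ref{EasyLemma2} (and its proof) to identify $T_{\decomp}$, at the price of having to open up Lemma \ref{PTPLemma} for the uniform bound; the paper's route keeps all convergence issues packaged in the two auxiliary lemmas and settles the identity $T_{\decomp}=S$ purely algebraically. Both yield the same estimate $\|T_{\decomp}\|\leq C\|T\|+\varepsilon$ after rescaling $\varepsilon'$, and your side remarks (uniform isomorphism constants for the aggregated blocks, inherited $p$-additivity of the new projections) address exactly the points the paper passes over with ``the new decomposition also satisfies the condition about the projections.''
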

\begin{proof}
Using the sequence $\{m_j\}$ from Lemma \ref{MainLemma}, define a new decomposition where $\displaystyle \widetilde{X}_0 = \bigoplus_{k=0}^{m_0} X_k$, 
$\displaystyle \widetilde{X}_i = \bigoplus_{k=m_{i-1}+1}^{m_i} X_k$. Note that the new decomposition also satisfies the condition about the projections.
 For simplicity of notation we will denote the new decomposition by $\{X_i\}$ and the 
projections onto $X_i$ by $P_i$. In the new notation the conclusion from Lemma \ref{MainLemma} can be written as 
$$
\sum_{j=0}^{\infty}\|(I-\widetilde{P}_{j})T\| + \sum_{j=0}^{\infty}\|T(I-\widetilde{P}_{j})\| 
+ \sum_{i,j=0}^{\infty}\|(I-\widetilde{P}_{i})T(I-\widetilde{P}_{j})\| < \varepsilon .
$$
Now using $P_i(I-\widetilde{P}_{i-1}) = (I-\widetilde{P}_{i-1})P_i = P_i$ we have 

\begin{eqnarray*}
\sum_{i,j = 0}^{\infty} \|P_iTP_j\| &\leq& \|P_0TP_0\| + C_1\sum_{i = 1}^{\infty} \|P_iT\| + C_1\sum_{j = 1}^{\infty} \|TP_j\| +
\sum_{i,j = 1}^{\infty} \|P_iTP_j\| \\
&\leq& \|P_0TP_0\| + C_1\sum_{i = 1}^{\infty} \|P_i(I-\widetilde{P}_{i-1})T\| \\
&+& C_1\sum_{j = 1}^{\infty} \|T(I-\widetilde{P}_{j-1})P_j\| \\
&+& \sum_{i,j = 1}^{\infty} \|P_i(I-\widetilde{P}_{i-1})T(I-\widetilde{P}_{j-1})P_j\|  \\
&\leq& \|P_0TP_0\| + C_1^2\sum_{i = 1}^{\infty} \|(I-\widetilde{P}_{i-1})T\| + C_1^2\sum_{j = 1}^{\infty} \|T(I-\widetilde{P}_{j-1})\|  \\
&+& C_1^2\sum_{i,j = 1}^{\infty} \|(I-\widetilde{P}_{i-1})T(I-\widetilde{P}_{j-1})\|  \\
&\leq& \|P_0TP_0\| + C_1^2\varepsilon .
\end{eqnarray*}

Since the series $\displaystyle \sum_{i=0}^{\infty}P_i$ is strongly convergent to $I$, we have  $\displaystyle T = \sum_{i,j = 0}^{\infty} P_iTP_j$ in the
norm topology of $\opX$. Using Lemma \ref{PTPLemma}, the operator 
$$
S = \sum_{i,j = 0}^{\infty} (P_iTP_j)_{\decomp}
$$ 
is well defined and using Lemma \ref{EasyLemma} we have that $T = D_{\RS}(-S\LS)\in\AD$. Now $D_{\RS}(\TD\LS - S\LS) = 0$ and by the proof of
Lemma \ref{EasyLemma2} we have
$$
0 = -(D_{\RS}(\TD\LS - S\LS))_{\decomp}\LS = (\TD - S)\LS .
$$
From the equation above we conclude that $\TD = S$ and $\|\TD\| \leq C\|T\| + \varepsilon$.
\end{proof}

\section{Compactness and commutators on $\ell_p$ and $L_p\,(1\leq p <\infty)$}
In order to prove the conjecture about the structure of the commutators on a given space we have to show that all the elements in the unique maximal 
ideal are commutators. We prove a lemma that takes care of this in the case $\X = \ell_1$ and also shows that the ideal of compact operators consists of 
commutators only, provided the space $\X$ has some additional structure. Before that we will show a lemma about the operators $T$ on $\X$ 
which do not preserve a copy of $\X$ in the cases of $\X = \ell_1$ and $\X = L_1$, which we will use and is interesting on its own. 
\begin{lem}\label{SmallNormLemma}
Let $\X = L_1$ or $\X = \ell_1$ and suppose that $T\in\opX$ does not preserve a copy of $\X$. Then, for every $\delta >0$ and for every $\tilde{X}\subset\X$, 
$\tilde{X}\equiv\X$, there exists $Y\subset\tilde{X}$, such that  $Y$ is $(1+\delta)$ isomorphic to $\X$, $(1+\delta)$ complemented in $\X$, and 
$\|T_{|Y}\|<\delta$.
\end{lem}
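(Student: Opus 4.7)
The plan is to realize $Y$ as the closed linear span of a block basic sequence $(y_n) \subset \tilde X$ whose elements have disjoint supports in an appropriate canonical representation of $\X$ and satisfy $\|T y_n\| \to 0$ fast enough that $\sum_n \|T y_n\|$ is much smaller than $\delta$. The hypothesis that $T$ does not preserve a copy of $\X$ is used in its standard equivalent form: for every subspace $Z \subset \X$ with $Z \cong \X$, the restriction $T|_Z$ is not bounded below, so for every $\eta > 0$ there exists a unit vector $z \in Z$ with $\|T z\| < \eta$.

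For $\X = \ell_1$, fix a summable sequence $(\delta_n)$ with $(1+\delta)\sum_n \delta_n < \delta$. A standard Bessaga--Pelczynski-type perturbation argument applied to $\tilde X \cong \ell_1$ produces a basic sequence $(e_i) \subset \tilde X$ that is an arbitrarily small perturbation of a sequence $(f_i)$ of disjointly supported blocks of the canonical unit vector basis of $\ell_1$, with basis constants as close to $1$ as desired. I then construct $(y_n)$ inductively: having chosen $y_1, \ldots, y_{n-1}$ as finitely supported blocks of $(e_i)$ on indices $\leq N_{n-1}$, the tail $Z_n = \overline{\mathrm{span}}\{e_i : i > N_{n-1}\}$ is still isomorphic to $\ell_1$, so by hypothesis there is a unit vector $z \in Z_n$ with $\|T z\| < \delta_n/2$. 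Truncating the expansion of $z$ in $(e_i)$ to a long enough initial segment and renormalizing gives a finite block $y_n$ of $(e_i)$ with $\|y_n\|=1$ and $\|T y_n\| < \delta_n$, and the truncation can additionally be arranged so that the corresponding block $y'_n$ of $(f_i)$ satisfies $\|y_n - y'_n\| < \delta\, 2^{-n}$. Setting $Y = \overline{\mathrm{span}}(y_n)$, the small-perturbation principle together with the fact that $\overline{\mathrm{span}}(y'_n)$ is $1$-complemented in $\ell_1$ (disjoint blocks of the canonical basis) yields that $Y$ is $(1+\delta)$-isomorphic to $\ell_1$ and $(1+\delta)$-complemented in $\ell_1$. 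Finally, any unit vector $y = \sum c_n y_n \in Y$ satisfies $\sum |c_n| \leq 1 + \delta$, whence $\|T y\| \leq (1+\delta)\sum_n \delta_n < \delta$.

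For $\X = L_1$ the same scheme applies, but the Bessaga--Pelczynski perturbation step is replaced by a deeper structural fact: any subspace of $L_1$ isomorphic to $L_1$ contains, for every $\varepsilon > 0$, a further subspace that is $(1+\varepsilon)$-isomorphic and $(1+\varepsilon)$-complemented in $L_1$ and is spanned by a Haar-like system on which gliding-hump and disjoint-support perturbations can be carried out. With such a system available, the inductive construction of $(y_n)$ and the final norm estimate go through verbatim. The main obstacle of the proof lies precisely in this $L_1$ ingredient: whereas the production of an almost-disjointly-supported block basis in an arbitrary copy of $\ell_1$ is an elementary gliding hump, the analogous statement for $L_1$-subspaces of $L_1$ relies on the nontrivial structure theory of such subspaces (Kadec--Pelczynski, Rosenthal-type independence arguments), and this is where the real content of the lemma lives in the $L_1$ case.
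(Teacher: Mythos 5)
Your $\ell_1$ argument is essentially sound, and in fact takes a slightly different route from the paper: the paper first observes that $T$ is strictly singular, hence compact (Feldman--Gohberg--Markus), extracts a norm-convergent subsequence of $T\psi_i$ for the disjointly supported isometric basis $(\psi_i)$ of $\tilde{X}$, and passes to differences $\varphi_i = (\psi_{2i}-\psi_{2i+1})/2$, whereas you run a gliding hump directly from the hypothesis that $T$ is bounded below on no copy of $\ell_1$. Both work, and the key features (normalized disjointly supported blocks span an isometric, $1$-complemented copy of $\ell_1$, and $\|T(\sum c_n y_n)\| \le \sum_n |c_n| \, \|Ty_n\|$ with $\sum |c_n|$ controlled by the norm) are correctly used.

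The $L_1$ half, however, has a genuine gap. Your claim that ``the inductive construction of $(y_n)$ and the final norm estimate go through verbatim'' once one has a Haar-like system is false, and this is exactly where the lemma's content lies. First, the closed span of a sequence of (almost) disjointly supported, or gliding-hump selected, vectors $y_n$ in $L_1$ with $\|Ty_n\|$ small is a copy of $\ell_1$, not of $L_1$, so the resulting $Y$ cannot be $(1+\delta)$-isomorphic to $\X = L_1$. Second, even if you keep an entire Haar-like system spanning a copy of $L_1$, smallness of $T$ on the individual system vectors gives no control of $\|T_{|Y}\|$: the estimate $\|Ty\| \le \sum_n |c_n|\,\|Ty_n\|$ has no analogue, because Haar-type expansion coefficients of a norm-one element of $L_1$ are not absolutely summable with any bound (the system is not equivalent to the $\ell_1$ basis and is not even unconditional in $L_1$). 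What is really needed is a whole subspace $Z\subset\tilde{X}$ isomorphic to $L_1$ with $\|T_{|Z}\|<\delta$, and that cannot be manufactured by a vector-by-vector selection; the paper obtains it from the deep results it cites: not preserving a copy of $L_1$ means $T$ is not an $E$-operator (Enflo--Starbird), hence not sign-preserving (Rosenthal, Theorem 1.5), and then Rosenthal's Lemma 3.1 produces the subspace $Z\simeq\tilde{X}$ with $\|T_{|Z}\|<\delta$; only afterwards does one invoke the structural fact you mention (Rosenthal's Theorem 1.1) to refine $Z$ to a subspace $Y$ that is $(1+\delta)$-isomorphic to $L_1$ and $(1+\delta)$-complemented. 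Your proposal cites only the refinement step and replaces the essential step by a gliding hump that does not work in $L_1$.
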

\begin{proof}
Consider the case $\X = L_1$ first. By assumption $T$ does not preserve a copy of $L_1$ which implies that $T$ is not an $E$-operator (actually this can 
be taken as an equivalent definition for an operator not to be an $E$-operator \cite[Theorem 4.1]{Enflo_Starbird}) 
and hence it is not sign-preserving either (\cite[Theorem 1.5]{Rosenthal_L1}). Now \cite[Lemma 3.1]{Rosenthal_L1} gives us a subspace $Z\subset \tilde{X}$
 such  that $Z\simeq \tilde{X}$ and $\|T_{|Z}\|<\delta$. Using \cite[Theorem 1.1]{Rosenthal_L1} we find $Y\subset Z$,  which is 
$(1+\delta)$ isomorphic to $\tilde{X}\equiv L_1$, $(1+\delta)$ complemented in $\tilde{X}$ and $Y$ clearly satisfies $\|T_{|Y}\|<\delta$.
If $Q$ is the norm one projection onto $\tilde{X}$, and $R: \tilde{X}\to Y$ is a projection of norm less than $1+\delta$, then 
$P: = RQ$ is a projection from $L_1$ onto $Y$ and $\|P\|<1+\delta$.\\
For the case $\X = \ell_1$ we use the fact that if $\tilde{X}$ is isometric to $\ell_1$, 
then $\tilde{X} = \overline{\textrm{span}}\{\psi_i\,:\, i = 0,1,\ldots\}$ for some  vectors $\{\psi_i\}_{i=1}^{\infty}$ of norm one, such that 
$$
\psi_j = \sum_{i\in\sigma_j}\lambda_ie_i, \qquad\textrm{with}\,\,\sigma_j\cap\sigma_k = \emptyset\,\,\textrm{for}\,\, j\neq k
$$
where $\{e_i\}_{i=1}^{\infty}$ is the standard unit vector basis of $\ell_1$. This follows trivially from the observation that
$Ue_i$ and $Ue_j$ must have disjoint supports if $U:\tilde{X}\to\ell_1$ is an into isometry (cf. \cite[Proposition 2.f.14]{LT}).
 Note also that since every infinite dimensional subspace of 
$\ell_1$ contains an isomorphic copy of $\ell_1$ (\cite[Proposition 2.a.2]{LT}), then the operator $T$ is automatically strictly singular and hence compact (\cite{GM}).
Then, $\{T\psi_i\}_{i=0}^{\infty}$ is relatively compact in
$\ell_1$ and hence there exist $y\in\ell_1$ and a subsequence $\{\psi_{i_j}\}$ such that $T\psi_{i_j}\to y$. WLOG we may assume that 
$T\psi_i\to y$. Finally, define $\displaystyle \varphi_i = \frac{\psi_{2i} - \psi_{2i+1}}{2}$ for $i=0,1,\ldots$. Clearly
 $\{\varphi_i\}_{i=0}^{\infty}$ is a normalized block basis of $\tilde{X}$ such that $\|T\varphi_i\|_1\to 0$.
Assume WLOG that $\|T\varphi_i\|_1<\varepsilon$ (this can be easily achieved by passing to a subsequence).
Then for $Y = \overline{\textrm{span}}\{\varphi_i\,:\, i = 0,1,\ldots\}$ we have $\|T_{|Y}\|<\varepsilon$. Note also that $Y\subset \tilde{X}$
 is 1-complemented in $\tilde{X}$ as it is the closed span of a normalized block basis and clearly is isometric to $\tilde{X}$ (\cite[Lemma 1]{Pelcz_pojections}). 
Finally, let $R: \tilde{X}\to Y$ be the norm one projection onto $Y$ and $Q:\ell_1\to\tilde{X}$ be the norm one projection onto $\tilde{X}$.
 Then clearly $P: = RQ$ is a norm one projection onto $Y$.
\end{proof}

\begin{lem}\label{CompactLemma}
Let $\X$ be a Banach space for which $\displaystyle  \X\simeq \left ( \bigoplus_{i=0}^{\infty} \X\right)_{p}$ for some $1\leq p<\infty$ or $p=0$.
In the case $p=1$ we  will assume  that $\X = L_1$ or $\X = \ell_1$.
 Let $T\in\opX$ be a compact operator and $\varepsilon > 0$. Then there exists a decomposition $\decomp$ of $\X$  such that 
$T\in\AD$ and $\|\TD\| \leq C\|T\| + \varepsilon$ for some constant $C$ depending on $\decomp$ only. Consequently $T$ is a commutator and  $T = -D_{\RS}(\TD\LS)$.
\end{lem}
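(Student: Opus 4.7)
The plan is to reduce to Theorem \ref{thm:decomp}: I would produce a decomposition $\decomp_0$ of $\X$ satisfying the hypothesis (\ref{eq:maincondition}) of Lemma \ref{MainLemma}, namely $\lim_n\|(I-\widetilde{P}_n)T\|=\lim_n\|T(I-\widetilde{P}_n)\|=0$. Theorem \ref{thm:decomp} then delivers the refined decomposition $\decomp$ with $T\in\AD$ and $\|\TD\|\le C\|T\|+\varepsilon$, and Lemma \ref{EasyLemma} furnishes the commutator representation $T=-D_{\RS}(\TD\LS)$. In every case under consideration the standard partial-sum projections $\widetilde{P}_n$ converge strongly to $I$ on $\X$, so the compactness of $T$ together with the uniform convergence of $(I-\widetilde{P}_n)$ to $0$ on the compact set $\overline{T(B_\X)}$ yields $\|(I-\widetilde{P}_n)T\|\to 0$ for free. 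The whole task is to produce the other limit $\|T(I-\widetilde{P}_n)\|\to 0$.

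For $1<p<\infty$ and for $p=0$ I would simply take $\decomp_0$ to be any decomposition coming from the hypothesis $\X\simeq(\bigoplus\X)_p$. By duality, $\|T(I-\widetilde{P}_n)\|=\|(I-\widetilde{P}_n^*)T^*\|$; since $\X^*\simeq(\bigoplus\X^*)_{p'}$ with $p'<\infty$ (one has $p'=p/(p-1)$ when $p>1$, and $p'=1$ in the $c_0$-sum case $p=0$), the adjoints $\widetilde{P}_n^*$ likewise converge strongly to $I_{\X^*}$, and the same compactness argument applied to the compact operator $T^*$ produces the limit. Theorem \ref{thm:decomp} then finishes these cases.

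For $p=1$ with $\X=\ell_1$ or $\X=L_1$ the dualization breaks down because $\X^*$ is an $\ell_\infty$-sum, and this is where Lemma \ref{SmallNormLemma} becomes indispensable. I would start with a standard ``atomic'' decomposition $\X\simeq(\bigoplus_{i\ge 0}X_i)_1$ in which each $X_i$ is a copy of $\X$ supported on a mutually disjoint piece (a coordinate block in $\ell_1$, a measurable subset in $L_1$). Compactness of $T$ implies $T$ does not preserve a copy of $\X$, so for each $i\ge 1$, Lemma \ref{SmallNormLemma} applied with $\tilde X=X_i$ and $\delta=\varepsilon/2^{i+2}$ produces $Y_i\subset X_i$ uniformly isomorphic to $\X$, uniformly complemented in $\X$, with $\|T|_{Y_i}\|<\varepsilon/2^{i+2}$ and $Y_i$ still supported inside the atom carrying $X_i$. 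Writing $X_i=Y_i\oplus Z_i$ with a complement $Z_i$ on the same atom, I would set $\tilde X_i:=Y_i$ for $i\ge 1$ and $\tilde X_0:=X_0\oplus\bigl(\bigoplus_{i\ge 1}Z_i\bigr)_1$. The Section~2 axioms are satisfied: each $\tilde X_i$ is uniformly isomorphic to $\X$; the $\tilde X_i$ with $i\ge 1$ live on pairwise disjoint atoms, giving the required $\ell_1$-additivity of their projections; and the projection norms are uniformly bounded. The bound $\|T|_{Y_i}\|<\varepsilon/2^{i+2}$ together with the uniform estimate $\sum_{i\ge 1}\|P_{Y_i}x\|\le C\|x\|$ then gives $\|T(I-\widetilde{P}_n)\|\to 0$, and $\|(I-\widetilde{P}_n)T\|\to 0$ follows as above. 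The main obstacle is exactly this $p=1$ construction: one must simultaneously maintain the Section~2 bookkeeping (uniform isomorphism and projection constants, $\ell_1$-additivity for positive indices) and the smallness of $\|T|_{Y_i}\|$ for $i\ge 1$. The crucial feature that makes everything fit is that the $\ell_1$-additivity axiom is only demanded for indices $i_j>0$, which lets me absorb the uncontrolled complements $Z_i$ into the single exceptional slot $\tilde X_0$.
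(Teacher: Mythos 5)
Your proposal is correct and is essentially the paper's own argument: in every case you reduce to verifying condition (\ref{eq:maincondition}) and invoking Theorem \ref{thm:decomp}, and for $p=1$ you build exactly the paper's decomposition, applying Lemma \ref{SmallNormLemma} atom-by-atom with summable $\delta$'s and absorbing the uncontrolled complements into the zeroth slot, where the relaxed additivity requirement (only for positive indices) makes the bookkeeping work. The only cosmetic differences are that for $p>1$ and $p=0$ you obtain $\|T(I-\widetilde{P}_n)\|\to 0$ by passing to the compact adjoint $T^*$ and the dual ($\ell_{p'}$- or $\ell_1$-sum) decomposition instead of the paper's weakly null norming vectors $\psi_n$, and that the isomorphism $\widetilde{X}_0\simeq\X$, which you assert as part of the Section~2 axioms, is justified in the paper by citing Pelczynski (for $\ell_1$) and Enflo--Starbird (for $L_1$).
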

\begin{proof}
The result is known in the case of $\X = \LP$ and $\X = \ell_p$ for $1 < p < \infty$ (cf. \cite{Schneeberger} and \cite{Apostol_lp}), and for $\X = c_0$ 
and $\X = C(K)$ (\cite{Apostol_c0}). 
The proof presented here in these cases follows Apostol's ideas from \cite{Apostol_lp} and our generalized context gives a shorter proof in the case of 
$\LP$ for $1 < p < \infty$. Partial results were known in the case $\X = \ell_1$ (\cite[Theorem 2.6]{Apostol_lp}).

{\bf{Case I.}} $p>1$ or $p=0$. In this case we proceed as in Theorem 2.4 in \cite{Apostol_lp}. 
Consider an arbitrary decomposition $\decomp$ of $\X$ and denote $\displaystyle \widetilde{P}_n = \sum_{i=0}^{n} P_i$. Now we have 
$$
\lim_{n\to\infty}\|(I - \widetilde{P}_n)T\| = \lim_{n\to\infty}\|T(I - \widetilde{P}_n)\|= 0 .
$$
In fact if we choose $\varphi_i, \psi_i\in\X$ such that 
$$
\|(I - \widetilde{P}_n)T\varphi_n\|_{\X} > \|(I - \widetilde{P}_n)T\| - \frac{1}{n+1}, \,\, \|\varphi_n\| = 1
$$
$$
\|T(I - \widetilde{P}_n)\psi_n\|_{\X} > \|T(I - \widetilde{P}_n)\| - \frac{1}{n+1}, \,\, \|\psi_n\| = 1,\,\, (I - \widetilde{P}_n)\psi_n = \psi_n .
$$
Since the set $\{T\varphi_i\}_{i=0}^{\infty}$ is relatively compact in $\X$ and the sequence $\{(I - \widetilde{P}_i)\}_{i=0}^{\infty}$
 converges strongly to $0$ we have $\displaystyle \lim_{n\to\infty}\|(I - \widetilde{P}_n)T\| = 0$. 
On the other hand, the sequence $\{\psi_i\}_{i=0}^{\infty}$ is weakly convergent  to $0$. Now using the fact that $T$ is compact, it follows that the 
sequence $\{T\psi_i\}_{i=0}^{\infty}$ converges to $0$ in norm and hence $\displaystyle \lim_{n\to\infty}\|T(I - \widetilde{P}_n)\|= 0$.
Now Theorem \ref{thm:decomp} gives the result.

{\bf{Case II.}} $p=1$. 
Fix $\varepsilon >0$ and let $\displaystyle \decomp =\{X_i\} $ be the fixed decomposition of $\X$ defined by 
$\displaystyle X_i = L_1[\frac{1}{2^{i+1}}, \frac{1}{2^i})$ in the  case of $\X = L_1$ and by $\displaystyle X_i = P_{N_i}\ell_1$ 
(where $\displaystyle \N = \cup_{i=0}^{\infty} N_i$ such that $\textrm{card}\, N_i = \textrm{card}\,\N$ for all $i\in\N$ and $N_j\cap N_j=\emptyset$
for $i\neq j$) in the case of $\X = \ell_1$. Using Lemma \ref{SmallNormLemma} for each $X_i$ with $\displaystyle \delta= \frac {\varepsilon}{2^i}$ will
give us $1+\varepsilon$ complemented subspaces $\{Y_i\}$ of $\X$ which are isomorphic to $\X$ and $\displaystyle \|T_{|Y_i}\|<\frac {\varepsilon}{2^i}$.
Set $\displaystyle Y_0 = (I - \sum_{i=1}^{\infty} P_i)\X$. Note that $\decomp = \{Y_i\}$ is a decomposition
for $\X$ since all the spaces are complemented and isomorphic to $\X$. This is clear for $Y_i$ for $i = 1, 2, \ldots$ and it also holds for 
$Y_0$, since $X_0\subset Y_0$ is complemented in $\X$, isomorphic to $\X$, and using \cite[Corollary 5.3]{Enflo_Starbird} in the case $\X = L_1$, and 
 \cite[Proposition 4]{Pelcz_pojections} in the case $\X = \ell_1$, it follows that $Y_0$ is  isomorphic to $\X$ as well.  
Now, if  $\displaystyle \widetilde{P}_n = \sum_{i=0}^{n} P_i$, then clearly we have 
$\displaystyle\lim_{n\to\infty}\|T(I - \widetilde{P}_n)\| = 0$. Since $T$ is  compact operator, then 
we have $\displaystyle\lim_{n\to\infty}\|(I - \widetilde{P}_n)T\| = 0$ as well (the argument provided in {\bf{Case I}}
above works in this case as well), so using Theorem \ref{thm:decomp} we conclude that 
$T$ is a commutator. 
\end{proof}
\begin{rem}
Using the previous lemma we immediately conclude that \cite[Theorem 4.3]{Schneeberger} holds for $p=1$. Namely, a multiplication operators $M_{\phi}$
on $L_1$ is a commutator if and only if the spectrum of $M_{\phi}$ contains more than one limit point or contains zero as the unique limit point.
\end{rem}
\begin{cor}Let $\X$ be a Banach space for which $\displaystyle  \X\simeq \left ( \bigoplus_{i=0}^{\infty} \X\right)_{p}$ for some $1\leq p<\infty$ or $p=0$. 
In  the case $p=1$ we will assume 
that $\X = L_1$ or $\X = \ell_1$.
Let $T\in\opX$ and suppose that $P$ is a projection on $\X$ such that $P\X\simeq \X\simeq (I-P)\X$ and that either $TP$ or $PT$ is a compact operator. 
Then $T$ is a commutator.
\end{cor}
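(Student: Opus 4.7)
Proof proposal:

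The plan is to combine Corollary \ref{largekernel} (which exhibits $TP_0$ as a commutator $D_{\RS}(\cdot)$ for any $T$) with Lemma \ref{CompactLemma} (which places compact operators in $\AD$) by building a single decomposition $\decomp$ of $\X$ that both respects the projection $P$ (by arranging $X_0 \supseteq (I-P)\X$) and places the compact operator in $\AD$. I treat the case $TP$ compact; the case $PT$ compact is dual, with $D_{\LS}$-commutators replacing $D_{\RS}$-commutators throughout.

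To construct $\decomp$, start with an initial decomposition $\decomp_0 = \{X_i^{(0)}\}$ having $X_0^{(0)} = (I-P)\X$ and $\bigoplus_{i \ge 1} X_i^{(0)} = P\X$; this is possible because both $(I-P)\X$ and $P\X$ are isomorphic to $\X \simeq (\bigoplus \X)_p$. I then need to upgrade $\decomp_0$ to a decomposition $\decomp$ in which $TP \in \AD$. For $p > 1$ or $p = 0$, the strong convergence $\widetilde{P}_n \to I$ together with compactness of $TP$ makes the hypotheses of Lemma \ref{MainLemma} automatic, so Theorem \ref{thm:decomp} applied to $TP$ produces the desired refinement. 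For $p = 1$ I imitate the proof of Lemma \ref{CompactLemma} Case II: apply Lemma \ref{SmallNormLemma} to the compact operator $TP$ (which does not preserve a copy of $\X$) to replace each $X_i^{(0)}$, $i \ge 1$, by a subspace $Y_i \subseteq X_i^{(0)}$ with $\|T_{|Y_i}\| = \|TP_{|Y_i}\| < \varepsilon/2^i$, set $Y_0 = (I - \sum_{i \ge 1} P_{Y_i})\X \supseteq (I-P)\X$, and then refine further via Theorem \ref{thm:decomp}. In all cases the refinement acts only by grouping consecutive pieces, so $X_0 \supseteq (I-P)\X$ is preserved.

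With $\decomp$ fixed, the containment $(I-P)\X \subseteq X_0$ gives $(I - P_0)\X \subseteq P\X$, whence $T(I - P_0) = TP(I - P_0)$ and thus $T = TP_0 + TP - TPP_0$. Corollary \ref{largekernel}, applied once to $T$ and once to $TP$, writes $TP_0 = D_{\RS}(W_1)$ and $(TP)P_0 = D_{\RS}(W_2)$ explicitly, while $TP \in \AD$ together with Lemma \ref{EasyLemma} gives $TP = -D_{\RS}((TP)_{\decomp} \LS) =: D_{\RS}(W_3)$. Summing the three representations, $T = D_{\RS}(W_1 + W_3 - W_2)$ is a commutator.

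The main obstacle is the joint construction of $\decomp$: fixing $X_0 \supseteq (I-P)\X$ while simultaneously forcing $TP \in \AD$. The saving point is that Theorem \ref{thm:decomp}'s refinement acts only by grouping successive pieces of the starting decomposition, so the containment $(I-P)\X \subseteq X_0$ is automatically preserved. The $p = 1$ case is the most delicate, because the adjoint partial-sum projections $\widetilde{P}_n^*$ on $L_\infty$ need not converge strongly to the identity, and compactness of $TP$ alone does not yield $\|TP(I - \widetilde{P}_n)\| \to 0$; this is precisely the gap that Lemma \ref{SmallNormLemma} fills.
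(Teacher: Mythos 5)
Your final assembly is fine: once a decomposition $\decomp$ in the paper's sense with $(I-P)\X\subseteq X_0$ and $TP\in\AD$ is in hand, the identity $T=TP_0+TP-TPP_0$ together with Corollary \ref{largekernel} and Lemma \ref{EasyLemma} does exhibit $T$ as $D_{\RS}$ of an explicit operator, and that is essentially the computation in the paper. The genuine gap is in the construction of $\decomp$, precisely at $p=1$. A decomposition here is not merely a complemented splitting into copies of $\X$: for $p\geq 1$ it must satisfy the exact additivity $\|\sum_{j}P_{i_j}x\|^p=\sum_j\|P_{i_j}x\|^p$ over the pieces with index $\geq 1$, and this isometric condition is what makes the powers of $\LS$ and $\RS$ uniformly bounded (Proposition \ref{basicprop}), hence what makes Lemma \ref{PTPLemma} and Theorem \ref{thm:decomp} usable. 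Declaring $X_0^{(0)}=(I-P)\X$ and splitting $P\X$ into pieces $X_i^{(0)}\simeq\X$ produces only isomorphic pieces, with no reason for this additivity to hold. Moreover, your $p=1$ step applies Lemma \ref{SmallNormLemma} inside each $X_i^{(0)}$, but that lemma is stated for $\tilde{X}$ \emph{isometric} to $\X$ (its $\ell_1$ proof uses that an isometric embedding sends the unit vectors to disjointly supported blocks), while your $X_i^{(0)}$ are merely isomorphic to $\X$; and even granting it, the resulting $Y_i$ would not be disjointly supported across different $i$, so $\{Y_i\}$ need not be a decomposition at all (the uniform bounds on $\|\RS^n\|,\|\LS^n\|$ can fail). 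This is exactly why Case II of Lemma \ref{CompactLemma} starts from the canonical pieces $X_i=L_1[2^{-(i+1)},2^{-i})$, respectively $X_i=P_{N_i}\ell_1$, rather than from an arbitrary splitting.

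The paper avoids adapting the decomposition to $P$ altogether: since being a commutator is similarity-invariant, it conjugates $T$ by an isomorphism $G$ carrying $(I-P)\X$ onto $X_0$ and $P\X$ onto $(I-P_0)\X$ of the canonical decomposition, so that after the similarity the relevant projection \emph{is} $I-P_0$ and Lemma \ref{CompactLemma}, with its fixed isometric decomposition, applies verbatim; only then does it run the algebra you wrote (for $1<p<\infty$ the paper does take the adapted decomposition as available, since the decomposition in Case I was arbitrary). To repair your argument, insert this similarity step (or otherwise prove that a decomposition with $X_0\supseteq(I-P)\X$ satisfying the exact $\ell_1$-additivity exists); as it stands, the construction you substitute for it is the delicate point for $p=1$, more so than the convergence issue $\|TP(I-\widetilde{P}_n)\|\to 0$ that you flag at the end.
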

\begin{proof}
First we treat the case when $TP$ is compact operator.
Let $\decomp = \{X_i\}_{i=0}^{\infty}$ be a decomposition for which $TP\in\AD$ and 
$\displaystyle \|(TP)_{\decomp}\|_{\X}\leq C\|TP\|_{\X} + \frac{\varepsilon}{2}$ for a fixed $\varepsilon >0$ (by Lemma\, \ref{CompactLemma}). 
We also want $\decomp$ to be such that $(I-P)\X =  X_0$ hence we may assume $(I-P) = P_0$, where $P_0$ is the projection onto $X_0$.
 This can obviously be done for $1 < p < \infty$
(since the decomposition used in the proof was arbitrary). In the case of $L_1$ we consider the operator $\tilde{T} = GTG^{-1}$
where $G : P\X\oplus (I-P)\X\to (I-P_0)\X\oplus X_0$ is an isomorphism such that $GP\X = (I-P_0)\X$, 
$G(I-P)\X = X_0$. In this case $\tilde{T}GPG^{-1}$ is compact and clearly we can choose the decomposition as in Lemma \ref{CompactLemma}
and apply the same argument.
Now WLOG we will assume that $\tilde{T} = T$. In the case of $\ell_1$ we can make a similarity as in the previous case and reduce to the case
where $TP_M$ is a compact operator for some $M\subset\N$. Define
$$
S = \LS T(I-P) - (P_0T(I-P)P_0)_{\decomp}\LS - (TP)_{\decomp}\LS .
$$
Use equation (\ref{eq:TinADRepresentation}) applied to $TP$ and $P_0T(I-P)P_0$ to get
\begin{eqnarray} \label{eq:p1}
-D_{\RS}((TP)_{\decomp}\LS) &=& TP \\ 
 -D_{\RS}((P_0T(I-P)P_0)_{\decomp}\LS) &=& P_0T(I-P)P_0 = P_0T(I-P).\label{eq:p2}
\end{eqnarray}
Now 
\begin{equation}\label{eq:p3}
D_{\RS}(\LS T(I-P)) = \RS\LS T(I-P) - \LS T(I-P)\RS = (I-P_0)T(I-P)
\end{equation}
since $(I-P)\RS = 0$. Combining (\ref{eq:p1}), (\ref{eq:p2}) and (\ref{eq:p3}) we conclude that $D_{\RS}S = T$.
If $PT$ is compact we consider $S = - (I-P)T\RS + \RS(P_0(I-P)TP_0)_{\decomp} + \RS(PT)_{\decomp}$ and a
similar calculation shows that $T = D_{\LS}(S)$.
\end{proof}
\section{Commutators on $\ell_1$} 

We already saw in the previous section that the compact operators on $\ell_1$ are 
commutators and in order to prove the conjecture in the case of $\X = \ell_1$ we have to show that all operators not of the form $\lambda I + K$, 
where $K$  is compact and $\lambda\neq 0$, are commutators. To do that we are going to show that if $T$ is not of the form $\lambda I + K$, then there
exist disjoint complemented subspaces $X$ and $Y$ of $\X$ which are isomorphic to $\X$ and such that $T_{|X}:X\to Y$ is an onto isomorphism.
As we will see, this last property of $T$ will be enough to show that $T$ is a commutator on any space $\X$  for which 
$\displaystyle \X\simeq \left ( \bigoplus_{i=0}^{\infty} \X\right)_{p}$ .

\begin{defn}\label{def:les}
The left essential spectrum  of $T\in\opX$ is the set (\cite{Apostol_les} Def 1.1)
\end{defn}
$$
\les = \{\lambda\in\Lambda : \inf_{\begin{array}{c} x\in Y \\ \|x\| = 1 \end{array}}\|(\lambda - T)x\| = 0, {\textrm{codim}}(Y)<\infty, Y - {\textrm{closed}}\}.
$$

For any $T\in\opX$, $\les$ is a closed non-void set (\cite[Theorem 1.4]{Apostol_les}).
The following lemma is an analog of Lemma 4.1 from \cite{Apostol_lp} and the proof follows the steps in the proof there. 
\begin{lem}\label{PTPLemma1}
Let $\X =\ell_1$ and let $T\in\opX$, $0\in\les$. If $T$ is not compact, then it is similar to an operator 
$T^{'}\in\opX$ for which there exists a projection $P_M$ such that  $M\subset\N$, $card\, M = card\,(\N - M)$, 
and $P_{\N - M}T^{'}P_M$ is not a compact operator. 
\end{lem}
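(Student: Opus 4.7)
The plan is to construct complementary projections $P, Q$ on $\ell_1$ whose ranges are both isomorphic to $\ell_1$ such that $QTP$ is not compact. An isomorphism $S: \ell_1 \to \ell_1$ sending $P, Q$ to coordinate projections $P_M, P_{\N\setminus M}$ then yields $T' = STS^{-1}$ with $P_{\N\setminus M} T' P_M$ non-compact and $\mathrm{card}\,M = \mathrm{card}\,(\N\setminus M) = \mathrm{card}\,\N$.

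First, I would use $0 \in \les$ to extract a normalized block basis $\{x_n\}_{n=0}^{\infty}$ in $\ell_1$ with $\|Tx_n\|_1 < \varepsilon$ for every $n$ (any prescribed $\varepsilon > 0$), pairwise disjoint finite supports $B_n$, and $\bigcup_n B_n$ having infinite complement in $\N$. The span $X_0 = \overline{\mathrm{span}}\{x_n\}$ is isometric to $\ell_1$ and $1$-complemented by the standard block-basis projection $P_0$, and its complement $Y_0 = \ker P_0$ is also isomorphic to $\ell_1$. Since $\|T|_{X_0}\| \leq \varepsilon$ and $K(\ell_1)$ is closed in $\mathcal{L}(\ell_1)$, non-compactness of $T$ forces $TP_{Y_0}$ to be non-compact for some such choice of $X_0$---otherwise $T$ would lie within every $\varepsilon > 0$ of the compact operators and hence itself be compact. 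Decomposing $TP_{Y_0} = P_0 T P_{Y_0} + P_{Y_0} T P_{Y_0}$, at least one summand is not compact.

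If the off-diagonal summand $P_0 T P_{Y_0}: Y_0 \to X_0$ is not compact, take $(P, Q) = (P_{Y_0}, P_0)$. The harder case is when only the diagonal block $P_{Y_0} T P_{Y_0}$ is non-compact on $Y_0 \simeq \ell_1$. There I would extract a normalized sequence $\{v_n\} \subset Y_0$ with $\{Tv_n\}$ not relatively compact and---using the Schur property of $\ell_1$ combined with a gliding-hump argument---pass to a subsequence along which both $\{v_n\}$ and $\{Tv_n\}$ are essentially normalized block bases with disjoint finite supports $C_n$ and $A_n$. By iteratively picking $v_n$ inside the finite-codimension subspace $P_{\N\setminus F_{n-1}}\ell_1 \cap \ker(P_{F_{n-1}} T)$ with $F_{n-1} = \bigcup_{k<n}(C_k \cup A_k)$, and building a ``tilt'' similarity $S$ that adds null-direction components drawn from $X_0$ to vectors of $Y_0$ (analogous to the model similarity $Se_{2n} = e_{2n} + e_{2n+1}$, $Se_{2n+1} = e_{2n+1}$ which handles $T = P_{\mathrm{evens}}$), one produces $T' = STS^{-1}$ whose off-diagonal coordinate block $P_{\N\setminus M} T' P_M$ is non-compact.

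The main obstacle is the harder case: producing a similarity that converts diagonal non-compactness into off-diagonal non-compactness when $T$ itself has every coordinate off-diagonal block compact. The hypothesis $0 \in \les(T)$ is essential here, since for any $T = \lambda I + K$ with $\lambda \neq 0$ one has $P_{\N\setminus M} T' P_M = P_{\N\setminus M} S K S^{-1} P_M$ compact for every similarity $S$ and every $M$; $0 \in \les(T)$ simultaneously excludes this obstructing class and furnishes the null-direction vectors $\{x_n\}$ needed to build the tilt similarity.
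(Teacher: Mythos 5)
Your skeleton parallels the paper's: use $0\in\les$ to build an almost-null block subspace, split $\ell_1=X_0\oplus Y_0$, run a dichotomy on the corner $P_0TP_{Y_0}$ versus the diagonal block $P_{Y_0}TP_{Y_0}$, and in the hard case convert diagonal non-compactness into corner non-compactness by a similarity. (The paper does the first step by citing Apostol's Lemma 3.2, which after a similarity gives $\sum_{n\in\N\setminus M}\|TP_{\{n\}}\|<\infty$, so $T(I-P_M)$ is genuinely \emph{compact}; your construction only gives $\|TP_{X_0}\|\le\varepsilon$, which is weaker and will matter below.) The genuine gap is exactly at what you yourself call the main obstacle: the hard case is asserted, not proved, and for the natural reading of your construction it fails. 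Model your ``tilt'' as $S=I+W$ with $Wv_n=x_n$, $W$ vanishing on $X_0$ and on a complement of $[v_n]$ in $Y_0$, so $W^2=0$ and $S^{-1}=I-W$. Writing $B=P_0TP_{Y_0}$ (compact) and $D=P_{Y_0}TP_{Y_0}$ (non-compact), the corner you need is
\[
P_0\,STS^{-1}P_{Y_0}\;=\;B+WD-P_0TWP_{Y_0}-P_0WTWP_{Y_0},
\]
whose only candidate for non-compactness is $WD$, the last two terms having norm $O\bigl(\varepsilon(\|W\|+\|W\|^2)\bigr)$. But $WDv_m=\sum_n v_n^*(Tv_m)x_n$, and your own selection (supports of $v_m$ and of $Tv_m$ pushed beyond $F_{m-1}$, the functionals $v_n^*$ carried on the supports of the $v_n$) forces $v_n^*(Tv_m)=0$ for $n\neq m$ and gives no lower bound whatsoever on the diagonal terms $v_n^*(Tv_m)$ when $n=m$; if $D$ acts like a disjoint shift on a block basis, $WD$ is compact (indeed zero on the $v_m$). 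The model $T=P_{\mathrm{evens}}$ hides this because there the source and image directions coincide. To repair the idea the tilt must be taken dual to the \emph{images}: pick disjointly supported norm-one functionals $d_n^*$ attached to the separated blocks $d_n=P_{Y_0}Tv_n$ and set $Wz=\sum_n d_n^*(z)x_n$, so $WDv_n\approx\|d_n\|x_n$ is manifestly non-compact; and since $TP_{X_0}$ is only small in norm rather than compact, you must also fix $\varepsilon$ in advance small compared with the essential norm of $T$ so the $O(\varepsilon)$ error terms cannot swallow $WD$. None of this quantitative or directional bookkeeping appears in your sketch.

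For comparison, the paper avoids all of it: once $T(I-P)$ and (in the hard case) $(I-P)TP$ are compact, it conjugates by the explicit involution $\sqrt{2}\,S=P-(I-P)+V+V'$, where $V$ carries $P\X$ isometrically onto $(I-P)\X$, and verifies in a few lines that $2(I-P)S^{-1}TSP=VPTP+K$ with $K$ compact, so the corner is non-compact with no gliding hump, no choice of $\varepsilon$, and no lower estimates. Your easy case and the final step (an isomorphism of $\ell_1$ carrying $X_0$ and $Y_0$ onto complementary coordinate blocks $P_{\N\setminus M}\ell_1$, $P_M\ell_1$) are fine, as is your closing remark about why $\lambda I+K$ is excluded; but as written the hard case does not go through, so either import the paper's rotation or carry out the corrected, image-dual tilt with the explicit estimate.
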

\begin{proof}
For simplicity of notation we will denote $P_M$ simply by $P$.
By Lemma 3.2 in \cite{Apostol_lp} and using a similarity we can obtain a subset $M$ of $\N$ so that $\displaystyle \sum_{n\in\N - M}\|TP_{\{n\}}\| <  \infty$, where
$P_{\{n\}}$ is the projection onto the $n$-th coordinate. From this inequality we have that $T(I-P)$ is compact.
If $(I-P)TP$ is not compact we are done, thus we may suppose that $(I-P)TP$ is compact.
The equality
$$
T = T(I-P) + (I-P)TP + PTP
$$
gives us that $PTP$ is not compact. Using $\X \equiv P\X\oplus_1(I-P)\X$, let $\varphi : P\X\to (I-P)\X$ be an isometry
 and define the operators $V$ and $V^{'}$ in the following way
$$
V(x) = \varphi(Px), \qquad V^{'}x = \varphi^{-1}((I-P)x) .
$$
It is easy to see that $PV = V(I-P) = V^{'}P = (I-P)V^{'} = V^2 = (V^{'})^2 =  0$ and $VV^{'}+V^{'}V  = I$. Define
$$
\sqrt{2}S = P - (I-P) + V + V^{'} .
$$
Now a simple check gives us 

\begin{eqnarray*}
2S^2 &=& (P - (I-P) + V + V^{'})(P - (I-P) + V + V^{'}) \\
&=& P + PV^{'} + (I-P) - (I-P)V + VP + VV^{'} - V^{'}(I-P) + V^{'}V  \\
&=& 2 + PV^{'} - (I-P)V + VP - V^{'}(I-P)  \\
&=& 2 + PV^{'} - V + VP - V^{'} = 2,
\end{eqnarray*}
hence $S = S^{-1}$. Now consider the operator $2(I-P)S^{-1}TSP$. Again a simple calculation shows that 

\begin{eqnarray*}
2(I-P)S^{-1}TSP &=& (-(I-P) + (I-P)V)T(P + VP) \\
&=& (-(I-P) + V)T(P + VP)\\
&=& -(I-P)TP -(I-P)TVP + VTP + VTVP = \\
&=& -(I-P)TP - (I-P)T(I-P)VP \\
&+& VPTP + VT(I-P)VP = VPTP + K
\end{eqnarray*} 
where $K =  -(I-P)TP - (I-P)T(I-P)VP + VT(I-P)VP$ is a compact operator because $T(I-P)$ and $(I-P)TP$ are compact operators
(the first one by construction, the second by assumption).
Since $V_{|P\X}$ is an isometry, we conclude that $VPTP$ is not compact and hence $(I-P)S^{-1}TSP$ is not compact either.
Taking $T^{'}  = S^{-1}TS$ we finish the proof.
\end{proof}
\begin{prop}\label{auxprop}
Let $\X = \ell_1$ and $T\in\opX$ be such that there exists a projection $P$ such that $P\X\simeq \X$, $(I-P)\X\simeq \X$ and 
the operator $(I-P)TP$ is not compact. Then there exists a complemented subspace $Y\subset P\X$ such that
$Y\simeq\X$, $(I-P)TP_{|Y}$ is an isomorphism into and $(I-P)TP(Y)$ is complemented in $\X$.
\end{prop}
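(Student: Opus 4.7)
The overall strategy is to exploit that on $\ell_1$ the classes of strictly singular and compact operators coincide, so the non-compact operator $(I-P)TP$ must be bounded below on some copy of $\ell_1$ inside $P\X$; a simultaneous perturbation-to-block-basis argument then delivers the desired complementation on both sides.

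Setting $S=(I-P)TP$, I would first argue that $S|_{P\X} : P\X\to (I-P)\X$ is not strictly singular. This combines two classical facts already invoked in the proof of Lemma \ref{SmallNormLemma}: every infinite-dimensional subspace of $\ell_1$ contains a copy of $\ell_1$ (\cite[Proposition 2.a.2]{LT}), and strictly singular operators on $\ell_1$ are compact (\cite{GM}). Hence $S$ is bounded below on some infinite-dimensional $W\subset P\X$, and inside $W$ one finds $Z\simeq\ell_1$ for which $S|_Z$ is an isomorphism onto $S(Z)\simeq\ell_1$.

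Next I would pick a normalized sequence $\{z_n\}\subset Z$ equivalent to the $\ell_1$-basis, put $u_n=Sz_n$ (also $\ell_1$-basis-equivalent since $S|_Z$ is an isomorphism), and pass to a coordinatewise convergent subsequence in $\ell_1$ by a diagonal argument. Replacing both sequences by the differences
$$
w_n = z_{2n}-z_{2n-1},\qquad v_n = u_{2n}-u_{2n-1} = Sw_n,
$$
preserves equivalence to the $\ell_1$-basis (up to a factor of $2$) and simultaneously renders both sequences coordinatewise null in $\ell_1$. A simultaneous gliding-hump argument then selects indices $n_1<n_2<\cdots$ and block intervals $(N_{k-1},N_k]$ of the standard $\ell_1$-basis so that the restrictions $\tilde w_k,\tilde v_k$ of $w_{n_k},v_{n_k}$ to $(N_{k-1},N_k]$ satisfy $\|w_{n_k}-\tilde w_k\|+\|v_{n_k}-\tilde v_k\|<\varepsilon\cdot 2^{-k}$: the coordinatewise nullness controls the head on $[1,N_{k-1}]$ while choosing $N_k$ large controls the tail beyond $N_k$, and both sequences are handled at once because both are coordinatewise null.

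The disjointly supported block bases $\{\tilde w_k\},\{\tilde v_k\}$ span $1$-complemented isometric copies of $\ell_1$ by \cite[Lemma 1]{Pelcz_pojections}. For $\varepsilon$ small, the standard small-perturbation principle yields that $Y:=\overline{\mathrm{span}}\{w_{n_k}\}\subset Z\subset P\X$ is isomorphic to $\ell_1$ and complemented in $\X$, and likewise $\overline{\mathrm{span}}\{v_{n_k}\}$ is complemented in $\X$; since $S|_Z$ is an isomorphism, so is $S|_Y$, with image exactly $(I-P)TP(Y)=\overline{\mathrm{span}}\{v_{n_k}\}$. The principal technical point I expect is arranging the block-basis approximation \emph{simultaneously} for both sequences along the same subsequence, and the coordinatewise-null reduction in the previous paragraph is precisely what makes this routine.
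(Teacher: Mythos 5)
Your argument is correct, but it takes a more hands-on route than the paper. You and the paper share the same first step: since on $\ell_1$ the compact and strictly singular operators coincide (\cite{GM}), $S=(I-P)TP$ is bounded below on some infinite dimensional $Z\subset P\X$ which, by \cite[Proposition 2.a.2]{LT}, may be taken isomorphic to $\ell_1$. From there the paper is shorter: it applies \cite[Lemma 2]{Pelcz_pojections} once, to the \emph{image} $S(Z)$, producing $U\subset S(Z)$ isomorphic to $\ell_1$ and complemented in $\X$, and then simply pulls back, setting $Y=(S|_Z)^{-1}U$. The point you re-derive by hand is automatic there: if $Q$ is a projection of $\X$ onto $U$, then $(S|_Y)^{-1}QS$ is a bounded projection of $\X$ onto $Y$, so complementation of $Y$ comes for free from complementation of $U$ and the fact that $S$ is globally defined; no simultaneous control of domain and image is needed. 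Your proof instead runs the gliding-hump and small-perturbation machinery simultaneously for $\{w_{n_k}\}$ and $\{v_{n_k}\}$, invoking \cite[Lemma 1]{Pelcz_pojections} for the disjointly supported blocks; this is essentially a proof of the content of \cite[Lemma 2]{Pelcz_pojections} carried out twice in parallel, and it is sound (the difference trick makes both sequences coordinatewise null, and the a priori lower $\ell_1$-estimates let you fix $\varepsilon$ in advance for the perturbation principle). What your version buys is self-containedness and explicit block-basis representations of $Y$ and $S(Y)$; what the paper's version buys is brevity and the reusable observation that preimages of complemented subspaces under restrictions of globally defined operators that are isomorphisms on a given subspace are again complemented.
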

\begin{proof}
Clearly $(I-P)TP$ is not a strictly singular operator since in $\opX$ the ideal of compact operators and strictly singular operators coincide (\cite{GM}).
In particular, this implies that there exists an infinite dimensional subspace $Z\subset P\X$ such that $(I-P)TP$ is an isomorphism on $Z$. Consider the 
infinite dimensional subspace $(I-P)TPZ$.
Using \cite[Lemma 2]{Pelcz_pojections} (cf. also \cite[Proposition 2.a.2]{LT}) we conclude that there exists $U\subset (I-P)TPZ$ which is complemented in
$\X$ and isomorphic to $\X$. Clearly $(I-P)TP$ is an isomorphism on $((I-P)TP)^{-1}U$ and since $U$ is complemented in $\X$, we also have that 
$((I-P)TP)^{-1}U$ is complemented in $\X$ as well.
\end{proof}

\begin{thm}\label{mainauxresult}
Let $\X \simeq \left( \bigoplus_{i=0}^{\infty} \X\right)_{p}$, $1\leq p<\infty$ or $p=0$, and let $T\in\opX$ be an operator for which there exists a projection $P$ such that $P\X\simeq \X$, $(I-P)\X\simeq \X$ and 
there exists a complemented subspace $Y\subseteq P\X$ such that $Y\simeq\X$, $(I-P)TP_{|Y}$ is an isomorphism into and
$X = (I-P)TP(Y)$ is also complemented in $\X$. Then there exists a decomposition $\decomp$ such that $T$ is similar to a matrix operator
$$\left( \begin{array}{cc}
* & L  \\
* & * 
\end{array} \right)
$$
on $\X\oplus\X$, where $L$ is the left shift associated with $\decomp$.
\end{thm}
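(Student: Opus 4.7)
The plan is to exhibit a direct-sum splitting $\X = A\oplus B$ with $A\simeq B\simeq\X$, together with isomorphisms $J_1:\X\to A$, $J_2:\X\to B$, and a decomposition $\decomp$ of $\X$, so that $T_{12}J_2 = J_1 L_\decomp$ where $T_{12}:=\pi_A T|_B$. Then $J(x_1,x_2):=J_1x_1+J_2x_2$ defines an isomorphism $\X\oplus\X\to\X$ for which the $(1,2)$-block of $J^{-1}TJ$ is $L$. Everything will be driven by the single isomorphism $U:=(I-P)TP|_Y:Y\to X$ provided by the hypothesis.

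Since $Y\simeq\X\simeq(\bigoplus_{i\ge 0}\X)_p$, first decompose $Y=\bigoplus_{i\ge 0}Y_i$ with each $Y_i\simeq\X$ complemented in $Y$, and set $X_i:=U(Y_i)$, so $X=\bigoplus_{i\ge 0}X_i$. Let $Y'$ be a complement of $Y$ in $P\X$ and $X'$ a complement of $X$ in $(I-P)\X$. Define $A:=\bigoplus_{i\ge 1}X_i$ and $B:=P\X\oplus X_0\oplus X'=Y_0\oplus Y'\oplus X_0\oplus X'\oplus\bigoplus_{i\ge 1}Y_i$, so $\X=A\oplus B$. For $y\in Y_i$ with $i\ge 1$, $T_{12}(y)=\pi_A(U(y)+PT(y))=U(y)$, since $U(y)\in X_i\subseteq A$ and $PT(y)\in P\X\subseteq B$. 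Hence $T_{12}|_{\bigoplus_{i\ge 1}Y_i}=U$ is an isomorphism onto $A$; this splits $T_{12}$ as a surjection with bounded section $U^{-1}|_A$, so $B=K\oplus\bigoplus_{i\ge 1}Y_i$ where $K:=\ker T_{12}$ is the graph over $Y_0\oplus Y'\oplus X_0\oplus X'$ of a bounded map into $\bigoplus_{i\ge 1}Y_i$. In particular $K\simeq Y_0\oplus Y'\oplus X_0\oplus X'$.

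To assemble the similarity, fix any decomposition $\decomp=\{\widetilde X_i\}$ of $\X$ with isomorphisms $\psi_i$, shift maps $\varphi_i=\psi_{i+1}^{-1}\psi_i$, and left shift $L$. Choose any isomorphism $J_1:\X\to A$ and any isomorphism $J_2|_{\widetilde X_0}:\widetilde X_0\to K$; for $i\ge 1$, set $J_2|_{\widetilde X_i}:=U^{-1}\circ J_1\circ\varphi_{i-1}^{-1}:\widetilde X_i\to\bigoplus_{j\ge 1}Y_j$. The uniform bounds on $\varphi_i$ from Proposition~\ref{basicprop}, combined with the $p$-sum structure of $\decomp$, ensure $J_2$ extends to a bounded isomorphism $\X\to B$. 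Then $T_{12}J_2|_{\widetilde X_i}=U\circ U^{-1}\circ J_1\circ\varphi_{i-1}^{-1}=J_1\circ L|_{\widetilde X_i}$ for $i\ge 1$ and $T_{12}J_2|_{\widetilde X_0}=0=J_1\circ L|_{\widetilde X_0}$, giving the required identity $T_{12}J_2=J_1L$.

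The principal obstacle is verifying $B\simeq\X$ and $K\simeq\X$ in the stated generality. Both reduce to the Pelczynski-type absorption $\X\oplus V\simeq\X$ for any complemented subspace $V\subseteq\X$, which I establish by regrouping: $\X\simeq\bigl(\bigoplus_{i\ge 0}(V\oplus V^c)\bigr)_p\simeq V\oplus\bigl(\bigoplus_{i\ge 0}(V^c\oplus V)\bigr)_p\simeq V\oplus(\bigoplus_{i\ge 0}\X)_p\simeq V\oplus\X$. Since $P\X,Y_0,X_0\simeq\X$ and $\X\oplus\X\simeq\X$, it follows that $B\simeq\X\oplus X'\simeq\X$ and $K\simeq\X\oplus(Y'\oplus X')\simeq\X$, which is exactly what makes the construction of $J_1,J_2$ possible.
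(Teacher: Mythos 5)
Your proof is correct, and while it rests on the same underlying mechanism as the paper's --- conjugating $T$ by an explicit isomorphism between $\X$ and $\X\oplus\X$ built from the isomorphism $U=(I-P)TP_{|Y}$ and a left shift --- the execution is genuinely different. The paper first normalizes so that $X=(I-P)\X$, then chooses two decompositions $\decomp_1=\{X_i\}$, $\decomp_2=\{Y_i\}$ adapted to $X$ and $Y$ (with $X_1\oplus X_2\oplus\cdots=X$, $Y_0=X$, $Y_1=Y$), conjugates by $S\varphi=\LSo\varphi\oplus\LSt\varphi$ so that the $(1,2)$-entry becomes $\LSo T\RSt$, and then corrects that entry to $\LSo$ by a second similarity $\mathrm{diag}(I,G)$ with an explicitly inverted corrector ($G^{-1}=(I-P)T\RSt+P$). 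You instead fix an arbitrary decomposition $\decomp$ in advance, split $\X=A\oplus B$ with $A$ the tail of $X$, observe that the corner map $T_{12}=\pi_A T_{|B}$ agrees with $U$ on the tail of $Y$ (so it is a surjection onto $A$ with complemented kernel $K$), and fold the corrector into the conjugating map itself: your $J_2$ is exactly $J_2|_{\widetilde X_0}P_0+U^{-1}J_1L$, which makes $T_{12}J_2=J_1L$, and hence the $(1,2)$-entry $L$, immediate. What your route buys: the leftover complements $Y'$ of $Y$ in $P\X$ and $X'$ of $X$ in $(I-P)\X$ are absorbed once and for all by the Pelczynski-type isomorphism $\X\oplus V\simeq\X$, whereas the paper's $\decomp_2$ tacitly requires the complement of $Y$ in $P\X$ to be split into infinitely many copies of $\X$, a point it passes over quickly; moreover you obtain the conclusion for every decomposition, not just a specially constructed one. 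What the paper's route buys is brevity at that spot: no kernel/graph analysis or absorption lemma, just the algebraic identity $(A+P)G=G(A+P)=I$. Two places in your write-up deserve an explicit sentence: record that $A=U(\bigoplus_{i\ge1}Y_i)\simeq(\bigoplus_{i\ge1}\X)_p\simeq\X$, which is what licenses the choice of $J_1$; and justify that $J_2$ is an isomorphism onto $B$ not by the uniform bounds on the $\varphi_i$ but by the identity $J_2=J_2|_{\widetilde X_0}P_0+U^{-1}J_1L$ together with $LR=I$, $RL=I-P_0$ from Proposition \ref{basicprop}: the first summand is an isomorphism of $\widetilde X_0$ onto $K$, the second an isomorphism of $(I-P_0)\X$ onto $\bigoplus_{j\ge1}Y_j$, and $B=K\oplus\bigoplus_{j\ge1}Y_j$. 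Neither point is a gap in the idea; both are one-line completions.
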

\begin{proof}
Let $P_X$ be a projection onto $X$. Note that  $X = P_X\X\subset (I-P)\X$ and hence $Y\subset P\X\subset (I-P_X)\X$. The previous observation shows that
the operator $P_XT(I-P_X)_{|Y}$ is an isomorphism from $Y$ onto $X$. Also, $X=P_X\X\simeq \X$ by the assumption of the theorem,
and $(I-P_X)\X$ contains $Y$ - a complemented copy of $\X$ and hence $(I-P_X)\X\simeq\X$ (using a result of Pelczynski 
\cite[Proposition 4]{Pelcz_pojections}). The observations we made imply that WLOG we may assume $P_X = I-P$. 
Consider two decompositions $\displaystyle \decomp_1 =\{X_i\} $ , $\displaystyle \decomp_2 =\{Y_i\} $ of $\X$ such that 
$X = Y_0 = X_1\oplus X_2\oplus\ldots$, \\
$X_0 = Y_1\oplus Y_2\oplus\ldots$  and $Y_1 = Y$.
Define a map $S$
$$
S\varphi = \LSo\varphi\oplus\LSt\varphi , \qquad \varphi\in\X
$$
 from $\X$ to $\X\oplus\X$. The map $S$ is invertible ($S^{-1}(a,b) = \RSo a + \RSt b$).
Just using the definition of $S$ and the formula for $S^{-1}$ it is easy to see that 

\begin{eqnarray*}
STS^{-1}(a,b) &=& ST(\RSo a + \RSt b) = S(T\RSo a + T\RSt b) \\
&=& (\LSo T\RSo a + \LSo T\RSt b)\oplus (\LSt T\RSo a + \LSt T\RSt b),
\end{eqnarray*}
hence
$$
STS^{-1} = \left( \begin{array}{cc}
* & \LSo T\RSt   \\
* & * 
\end{array} \right) .
$$
Let 
\begin{equation}\label{eq:A}
A=P_{Y_0}T\RSt = (I-P)T\RSt
\end{equation}
 and note that $A_{|P_{Y_0}\X} \equiv A_{|(I-P)\X} : (I-P)\X\to (I-P)\X$ is onto and invertible since $\RSt$ is an isomorphism on 
$P_{Y_0}\X$ and $\RSt(P_{Y_0}\X) = Y_1 = Y$.
Here we used the fact that $P_{Y_0}T$ is an isomorphism on $Y$ ($PY = Y$). Denote by $T_0$ the inverse of $A_{|P_{Y_0}\X} $
(note that $T_0$ is an automorphism on $(I-P)\X$) and consider 
$$
G = I + T_0(I-P) - T_0A .
$$
We will show that $G^{-1} =  A+P$. In fact, from the definitions of $A$ and $T_0$
 it is clear that 
\begin{equation}\label{eq:AProp}
AT_0(I-P) = T_0A(I-P) = I-P \,,\, PT_0 = PA = 0 \,,\, (I-P)A = A
\end{equation}
and since $A$ maps onto $(I-P)\X$ and ${AT_0}_{|(I-P)\X} = I_{|(I-P)\X}$ we also have
\begin{equation}\label{eq:AProp1}
A - AT_0A = 0 .
\end{equation}
Now using (\ref{eq:AProp}) and (\ref{eq:AProp1}) it is easy to see that 

\begin{eqnarray*}
(A+P)G &=& (A+P)(I + T_0(I-P) - T_0A) \\
&=& A + AT_0(I-P) - AT_0A + P = I-P + P = I\\
G(A+P) &=& (I + T_0(I-P) - T_0A)(A+P) \\
&=& A + P + T_0(I-P)A + T_0(I-P)P - T_0AA - T_0AP  \\
&=& A + P + T_0A - T_0AA- T_0AP \\
&=& P + (I - T_0A)A + T_0A(I-P)  \\
&=& P +(I-T_0A)(I-P)A +(I-P)\\
&=& I + ((I-P)-T_0A(I-P))A \\
&=& I + (I-P - (I-P))A = I .
\end{eqnarray*}
Using a similarity we obtain 
$$
\left( \begin{array}{cc}
I & 0   \\
0 & G^{-1} 
\end{array} \right)
\left( \begin{array}{cc}
* & \LSo T\RSt   \\
* & * 
\end{array} \right)
\left( \begin{array}{cc}
I & 0   \\
0 & G 
\end{array} \right) = 
\left( \begin{array}{cc}
* & \LSo T\RSt G   \\
* & * 
\end{array} \right) .
$$
It is clear that we will be done if we show that $\LSo=\LSo T\RSt G$. In order to do this consider
the equation $(A+P)G = I\Leftrightarrow AG + PG = I$. Multiplying both sides of the last equation on the left by $\LSo$ 
gives us $\LSo AG + \LSo PG = \LSo$. Using $\LSo P \equiv \LSo P_{X_0} = 0$ we obtain $\LSo AG = \LSo$. Finally, substituting $A$ from (\ref{eq:A})
 in the last equation yields
$$
\LSo = \LSo AG = \LSo P_{Y_0}T\RSt G = \LSo (I-P_{X_0})T\RSt G = \LSo T\RSt G
$$
which finishes the proof. 
\end{proof}
The following theorem was proved in  \cite{Apostol_lp} for $\X = \ell_p$ , but inessential modifications give the result in the general case.

\begin{thm}\label{diagcomm}
Let $\decomp$ be a decomposition of $\X$ and let $L$ be the left shift associated with it. Then the matrix operator 
$$
\left( \begin{array}{cc}
T_1 & L   \\
T_2 & T_3 
\end{array} \right)
$$
acting on $\X\oplus\X$ is a commutator.
\end{thm}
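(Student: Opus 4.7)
The plan is to produce explicit $A, B \in \mathcal{L}(\X\oplus\X)$ with $[A,B]=M$, adapting Apostol's strategy by working through a carefully chosen decomposition of $\X\oplus\X$ and the derivation machinery of Section~2.

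First, I would introduce the interleaved decomposition $\tilde\decomp=\{\tilde X_i\}$ of $\X\oplus\X$ defined by $\tilde X_{2i}=0\oplus X_i$ and $\tilde X_{2i+1}=X_i\oplus 0$ for $i\ge 0$, with the connecting isomorphisms naturally induced by those of $\decomp$. A direct computation shows that the associated left and right shifts have the block-matrix forms
\[
\tilde L=\begin{pmatrix}0 & L\\ I & 0\end{pmatrix},\qquad \tilde R=\begin{pmatrix}0 & I\\ R & 0\end{pmatrix},
\]
satisfying $\tilde L\tilde R=I$ and $\tilde R\tilde L=I-\tilde P_0$, where $\tilde P_0=\begin{pmatrix}0&0\\0&P_0\end{pmatrix}$ projects onto $\tilde X_0$. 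The structural point motivating this choice is that the $(1,2)$ block of $\tilde L$ already equals the $(1,2)$ block of $M$.

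Second, I would try the ansatz $A=\tilde L$ with $B=\begin{pmatrix}B_1 & B_2\\ B_3 & B_4\end{pmatrix}$ free. Expanding $[\tilde L,B]$ block-wise and equating with $M$ gives a system of four operator equations, and after eliminating $B_1=B_4+T_2$ from the $(2,1)$ entry and $B_2=B_3L+T_3$ from the $(2,2)$ entry, the remaining two equations collapse to the derivational relations
\[
[L,B_3]=T_1+T_3,\qquad [L,B_4]=(I+T_2)L.
\]
The first has the form "left shift commutator equals an arbitrary operator"; the second has the useful feature that $L$ appears on the right of the right-hand side, so trying $B_4=VL$ reduces it to $[L,V]=I+T_2$. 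Both reductions nevertheless fail in general, because Wintner's theorem forbids $I$ from being a commutator, so the $I$ summand is an obstruction.

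Third, to overcome this I would broaden the ansatz to $A=\tilde L+E$ with $E=\begin{pmatrix}E_1 & 0\\ E_3 & E_4\end{pmatrix}$ of block form preserving the $(1,2)=L$ structure. The commutator equation becomes $[\tilde L,B]+[E,B]=M$; the extra terms $[E,B]$ provide precisely the missing flexibility. Choosing $E_1,E_3,E_4$ built from $R$, $L$, $P_0$, and suitable fragments of $T_1,T_2,T_3$ as in Apostol's explicit construction, the augmented system becomes solvable for arbitrary $T_1,T_2,T_3$. Verification that the resulting $A,B$ satisfy $[A,B]=M$ is then a routine calculation using Proposition~\ref{basicprop} and the identities $LR=I$, $RL=I-P_0$, $LP_0=0$, and $P_0R=0$.

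The main obstacle is identifying the correct blocks of $E$ and $B$ that simultaneously solve the resulting coupled operator equations for completely arbitrary $T_1,T_2,T_3$. This is the content of Apostol's original argument in \cite{Apostol_lp}; the "inessential modifications" for the general setting consist in replacing his concrete infinite-matrix manipulations on $\ell_p$ by the abstract operator-algebraic identities for the shifts $L,R$ associated with $\decomp$ that were established in Section~2.
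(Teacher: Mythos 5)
You have correctly computed the interleaved shifts and the reduction of the ansatz $A=\tilde L$: the four block equations do collapse to $[L,B_3]=T_1+T_3$ and $[L,B_4]=(I+T_2)L$, and you are right that these are unsolvable in general (for $T_1+T_3=I$ the first is exactly Wintner's theorem). But the proof effectively ends there. The passage to $A=\tilde L+E$ is only announced: you never write down the coupled equations that $E_1,E_3,E_4$ and the blocks of $B$ must satisfy, never exhibit a choice solving them, and you state yourself that identifying these blocks is ``the main obstacle'' and ``the content of Apostol's original argument.'' That missing step is precisely the theorem; what you have is an ansatz plus a deferral. Nor can the deferral be discharged by citation in the form you set it up: the construction in \cite{Apostol_lp}, and the paper's adaptation of it, is not a perturbation of an interleaved shift on $\X\oplus\X$, so there is no off-the-shelf choice of $E$ to plug into your system. (A secondary point: before invoking any Section 2 machinery for $\tilde{\decomp}$ you would also need to check that the interleaved family satisfies the norm conditions required of a decomposition; but this is minor next to the missing construction.)

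For comparison, the paper defuses exactly the obstruction you identified in two steps, neither of which has an analogue in your proposal. First, it re-blocks $\decomp$ into a decomposition $\decomp_1=\{Y_i\}$ with $Y_0=\bigoplus_{i\geq 1}X_i$ and $X_0=\bigoplus_{i\geq 1}Y_i$, uses Corollary \ref{largekernel} to produce $G$ with $D_LG=\RSo\LSo(T_1+T_3)=P_{X_0}(T_1+T_3)$, and conjugates the matrix by $\left( \begin{array}{cc} I & 0 \\ G & I \end{array} \right)$; the diagonal sum then becomes $P_{Y_0}(T_1+T_3)$, which is again a commutator by Corollary \ref{largekernel}. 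So one may assume $T_1+T_3=AB-BA$ with $\|A\|$ scaled as small as needed. Second, with $M_S$ denoting left multiplication by $S$ and $R$ the right shift of $\decomp$, the operator $T_0=(M_I-M_RD_A)^{-1}M_R(T_3B-T_2)$ is defined by a Neumann series, and one verifies directly that
$$
\left( \begin{array}{cc} A & 0 \\ T_3 & A-L \end{array} \right)
\qquad\textrm{and}\qquad
\left( \begin{array}{cc} B & I \\ T_0 & 0 \end{array} \right)
$$
have commutator equal to the given matrix. The similarity that turns $T_1+T_3$ into a commutator and the solvable equation for $T_0$ are the two ideas your proposal lacks; without something playing their role, the system $[\tilde L+E,B]=M$ you pose remains unsolved, and the argument is not a proof.
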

\begin{proof}
Let $\decomp =\{X_i\}$ be the given decomposition. Consider a decomposition $\decomp_1 = \{Y_i\}$ such that 
$\displaystyle Y_0 = \bigoplus_{i=1}^{\infty}X_i$ and $\displaystyle X_0 = \bigoplus_{i=1}^{\infty}Y_i$.
Now  there exists an operator $G$ such that 
$D_{L_{\decomp}}G = \RSo\LSo (T_1+T_3)$. This can be done using Corollary \ref{largekernel}, since 
$\RSo\LSo = I - P_{Y_0} = P_{X_0}$. By making the similarity
$$
\widetilde{T} := \left( \begin{array}{cc}
I & 0   \\
G & I 
\end{array} \right)
\left( \begin{array}{cc}
T_1 & L   \\
T_2 & T_3 
\end{array} \right)
\left( \begin{array}{cc}
I & 0   \\
-G & I 
\end{array} \right)
=
\left( \begin{array}{cc}
T_1 - LG & L   \\
* & T_3 + GL 
\end{array} \right)
$$
 we have $T_1 + T_3 - LG + GL = T_1 + T_3 - D_LG = T_1 + T_3 - \RSo\LSo (T_1+T_3) = P_{Y_0}(T_1+T_3)$.
Using  again Corollary \ref{largekernel} we deduce that $T_1 + T_3 - LG + GL $ is a commutator. Thus  by replacing $T$ by $\widetilde{T}$ 
we can assume that $T_1+T_3$ is a commutator, say $T_1+T_3= AB-BA$ and $\displaystyle \|A\|<\frac{1}{2}$ (this can be done by scaling).
Denote by $M_T$ left multiplication by the operator $T$. Then 
$\displaystyle \|M_RD_A\| < 1$ where $R$ is the right shift associated with $\decomp$. The operator
$T_0 = (M_I - M_RD_A)^{-1}M_R(T_3B - T_2)$ is well defined and it is easy to see that 
$$
\left( \begin{array}{cc}
A & 0   \\
T_3 & A-L 
\end{array} \right)
\left( \begin{array}{cc}
B & I   \\
T_0 & 0 
\end{array} \right)
-
\left( \begin{array}{cc}
B & I   \\
T_0 & 0 
\end{array} \right)
\left( \begin{array}{cc}
A & 0   \\
T_3 & A-L 
\end{array} \right)
=
\left( \begin{array}{cc}
T_1 & L   \\
T_2 & T_3 
\end{array} \right) .
$$
This finishes the proof. 
\end{proof}
\begin{thm}\label{mainthm}
Let $\X = \ell_1$. An operator $T\in\opX$ is a commutator if and only if $\,\,T-\lambda$ is not compact for any $\lambda\neq 0$.
\end{thm}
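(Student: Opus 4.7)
The ``only if'' direction is the standard Wintner argument already sketched in the introduction: if $T = \lambda I + K$ with $\lambda\neq 0$ and $K$ compact, then the image of $T$ in the Calkin algebra $\opX/K(\X)$ is $\lambda I$, a nonzero scalar multiple of the identity, which is not a commutator in that quotient by Wintner's theorem; hence $T$ is not a commutator in $\opX$. For the ``if'' direction I split into two cases according to whether $T$ itself is compact.

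\textbf{Case A: $T$ is compact.} Apply Lemma \ref{CompactLemma} in the case $p=1$, $\X=\ell_1$: this directly produces a decomposition $\decomp$ with $T\in\AD$, and in particular writes $T = -D_\RS(\TD\LS)$, so $T$ is a commutator.

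\textbf{Case B: $T$ is not compact.} The left essential spectrum $\les$ is non-void, so I pick some $\mu\in\les$. By the standing assumption $T-\mu I$ is not compact (if $\mu=0$, because $T$ itself is not compact; if $\mu\neq 0$, by hypothesis), and $0\in\sigma_{l.e.}(T-\mu I)$. Thus Lemma \ref{PTPLemma1} applies to $T-\mu I$ and yields an invertible $S$ and a projection $P=P_M$ with $\mathrm{card}\,M=\mathrm{card}\,(\N\setminus M)$ such that $(I-P)\,S^{-1}(T-\mu I)S\,P$ is not compact. Since $(I-P)P=0$, the scalar term drops out and $(I-P)(S^{-1}TS)P$ is not compact. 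Similarity preserves the commutator property, via $U(AB-BA)U^{-1} = (UAU^{-1})(UBU^{-1}) - (UBU^{-1})(UAU^{-1})$, so I may replace $T$ by $S^{-1}TS$ and assume from now on that $(I-P)TP$ itself is non-compact, with $P\X\simeq\X\simeq (I-P)\X$.

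Now feed this $T$ and $P$ into Proposition \ref{auxprop}: I obtain a complemented subspace $Y\subseteq P\X$ with $Y\simeq\ell_1$ such that $(I-P)TP|_Y$ is an into isomorphism and $(I-P)TP(Y)$ is complemented in $\X$. These are exactly the hypotheses of Theorem \ref{mainauxresult}, which produces a decomposition $\decomp$ and a further similarity after which $T$ has the block matrix form
\[
\left(\begin{array}{cc} * & L \\ * & * \end{array}\right)
\]
on $\X\oplus\X$, with $L$ the left shift associated with $\decomp$. Finally, Theorem \ref{diagcomm} asserts that every such block matrix is a commutator, and the commutator property pulls back through all the similarities, so $T$ is a commutator.

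\textbf{Main obstacle.} The substantive content has already been concentrated in the preceding lemmas; the only delicate point in assembling them is the translation step at the start of Case B, namely noting that $0\in\les$ can always be arranged by subtracting an arbitrary element $\mu\in\les(T)$ without destroying non-compactness (which is where the hypothesis ``$T-\lambda I$ non-compact for all $\lambda\neq 0$'' is used), and that the subtracted scalar disappears from the relevant off-diagonal block $(I-P)(\cdot)P$. Once this is checked, the chain Lemma \ref{PTPLemma1} $\to$ Proposition \ref{auxprop} $\to$ Theorem \ref{mainauxresult} $\to$ Theorem \ref{diagcomm} closes the argument.
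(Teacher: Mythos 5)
Your proposal is correct and follows essentially the same route as the paper: the Wintner/Calkin-algebra argument for the forward direction, Lemma \ref{CompactLemma} when $T$ is compact, and otherwise picking $\mu\in\les$, applying Lemma \ref{PTPLemma1} to $T-\mu$, and chaining Proposition \ref{auxprop}, Theorem \ref{mainauxresult} and Theorem \ref{diagcomm}. Your explicit observation that the scalar disappears from the block $(I-P)(\,\cdot\,)P$ since $(I-P)P=0$ is exactly the step the paper compresses into ``the conclusion of Lemma \ref{PTPLemma1} for $T-\lambda$ implies the same claim for $T$.''
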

\begin{proof}
Note first that if $T$ is a commutator, from the remarks we made in the introduction it follows that $T-\lambda$ cannot be compact for any 
$\lambda\neq 0$. For proving the other direction we have to consider two cases:

{\bf{Case I.}} If $T$ is compact operator ($\lambda = 0$), the statement of the theorem follows from Lemma \ref{CompactLemma}.\\
{\bf{Case II.}} If $\,\,T-\lambda$ is not compact for any $\lambda$, then we consider $\les$.
Since $\les $ is a non-empty set, there exists $\lambda\in\les$ such that $\,\,T-\lambda$ is not compact and we are in a position to
apply Lemma \ref{PTPLemma1} for the operator $T-\lambda$.
 Note that the conclusion of Lemma \ref{PTPLemma1} for $\,\,T-\lambda$ implies that the same claim is true for $T$ as well. 
Now we are in position to apply Theorem \ref{mainauxresult} (which we can because of Proposition \ref{auxprop})
 and obtain that $T$ is similar to an operator of the form $\left( \begin{array}{cc}
* & L  \\
* & * 
\end{array} \right)
$.
Finally, we apply Theorem \ref{diagcomm} to complete the proof.
\end{proof}
\section{Commutators on $\ell_{p_1}\oplus\ell_{p_2}\oplus\cdots\oplus\ell_{p_n}$ and $\ell_{\infty}$}

\begin{lem}\label{diaglemma}
Let $X$ and $Y$ be Banach spaces and  $T = \left( \begin{array}{cc}
A & B   \\
C & D 
\end{array} \right)
$
an operator from $X\oplus Y$ into $X\oplus Y$. If $A$ and $D$ are commutators on the corresponding spaces then $T$ is a commutator on $X\oplus Y$.
\end{lem}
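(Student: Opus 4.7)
The plan is to write $A$ and $D$ as single commutators, say $A = a_1 a_2 - a_2 a_1$ on $X$ and $D = d_1 d_2 - d_2 d_1$ on $Y$, and then look for $E, F \in \mathcal{L}(X \oplus Y)$ of the form
$$
E = \left(\begin{array}{cc} a_1 & b \\ c & d_1 \end{array}\right), \qquad F = \left(\begin{array}{cc} a_2 & b \\ c & d_2 \end{array}\right),
$$
with the \emph{same} off-diagonal entries $b : Y \to X$ and $c : X \to Y$ appearing in both $E$ and $F$. Writing $N = \left(\begin{array}{cc} 0 & b \\ c & 0 \end{array}\right)$ and $\Delta_i = \left(\begin{array}{cc} a_i & 0 \\ 0 & d_i \end{array}\right)$, we have $E = \Delta_1 + N$ and $F = \Delta_2 + N$; the two copies of $N^2$ cancel in the commutator, leaving
$$
[E, F] = [\Delta_1, \Delta_2] + [\Delta_1 - \Delta_2, N].
$$
A direct block computation gives $[\Delta_1, \Delta_2] = \left(\begin{array}{cc} A & 0 \\ 0 & D \end{array}\right)$, while the off-diagonal entries of $[\Delta_1 - \Delta_2, N]$ are $(a_1 - a_2) b - b (d_1 - d_2)$ in position $(1,2)$ and $(d_1 - d_2) c - c (a_1 - a_2)$ in position $(2,1)$. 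Thus $[E, F] = T$ reduces to solving the two Sylvester equations
$$
P b - b Q = B, \qquad Q c - c P = C,
$$
where $P := a_1 - a_2 \in \mathcal{L}(X)$ and $Q := d_1 - d_2 \in \mathcal{L}(Y)$.

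The main obstacle is the solvability of these Sylvester equations. By the classical Sylvester--Rosenblum theorem, each has a (unique) solution $b \in \mathcal{L}(Y, X)$, respectively $c \in \mathcal{L}(X, Y)$, provided $\sigma(P) \cap \sigma(Q) = \emptyset$. This spectral condition is not automatic, but it can be forced by a harmless modification: for any scalar $\mu$ we still have $[a_1 + \mu I_X, a_2] = A$, so we are free to replace $a_1$ with $a_1 + \mu I_X$. This shifts $\sigma(P)$ by $\mu$ while leaving $\sigma(Q)$ fixed, and choosing $|\mu|$ strictly larger than $\|a_1 - a_2\| + \|d_1 - d_2\|$ guarantees that the two spectra are disjoint.

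With $\mu$ chosen in this way, Sylvester--Rosenblum produces $b$ and $c$ solving the two equations, and the block identity above then exhibits $T = [E, F]$ as a single commutator in $\mathcal{L}(X \oplus Y)$, as required.
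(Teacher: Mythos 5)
Your proposal is correct: the block algebra $[E,F]=[\Delta_1,\Delta_2]+[\Delta_1-\Delta_2,N]$ is right, the diagonal of $[\Delta_1,\Delta_2]$ is exactly $A$ and $D$, the off-diagonal equations $Pb-bQ=B$ and $Qc-cP=C$ are what must be solved, the replacement of $a_1$ by $a_1+\mu I_X$ does not change $[a_1,a_2]=A$, and for $|\mu|>\|a_1-a_2\|+\|d_1-d_2\|$ the shifted spectrum $\sigma(P)+\mu$ is indeed disjoint from $\sigma(Q)$, so Rosenblum's theorem applies. The route differs from the paper's in the ansatz and in the solvability tool. The paper makes one factor diagonal, with entries $A_2+I$ and $D_2$, and puts the unknowns $E_1,E_2$ off the diagonal of the other factor (whose diagonal is $A_1,D_1$); after rescaling so that $\|A_2\|,\|D_2\|<\frac14$, the equations $B=E_1D_2-(A_2+I)E_1$ and $C=E_2(A_2+I)-D_2E_2$ are solved by a Neumann series, since the map $S\mapsto -SA_2+D_2S$ has norm less than $1$. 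The paper's ``$+I$'' plays exactly the role of your ``$+\mu I$'': it separates the two diagonal blocks so that the Sylvester-type map becomes invertible. Your symmetric ansatz (the same off-diagonal block $N$ in both factors) is structurally cleaner --- the diagonal comes out as precisely $A$ and $D$ with no need to renormalize the commutator representations --- but it leans on the Sylvester--Rosenblum theorem, whose standard proof uses the holomorphic functional calculus; the paper's version is more elementary, gives explicit norm control, and works verbatim over real scalars. If the scalars are real, you should add a line: complexify $X$, $Y$ and the operators, solve the complexified equations, and average with the conjugate solution to get real solutions $b$, $c$, since the data $P$, $Q$, $B$, $C$ are real.
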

\begin{proof}
Let $A = [A_1, A_2]$ and $D = [D_1, D_2]$. Assume without loss of generality
that $\displaystyle \max (\|A_2\|,\|D_2\|)<\frac{1}{4}$. We need to find operators $E_1$ and $E_2$ such that
$$
T = \left( \begin{array}{cc}
A_1 & E_1   \\
E_2 & D_1 
\end{array} \right)
\left( \begin{array}{cc}
A_2 + I & 0   \\
0 & D_2 
\end{array} \right)
-
\left( \begin{array}{cc}
A_2 + I& 0   \\
0 & D_2 
\end{array} \right)
\left( \begin{array}{cc}
A_1 & E_1   \\
E_2 & D_1 
\end{array} \right),
$$
or equivalently, we have to solve the equations
\begin{eqnarray}
B & = & E_1D_2 - (A_2+I)E_1   \label{eq:diaglemma1}\\
C & = & E_2(A_2+I) - D_2E_2   \label{eq:diaglemma2}
\end{eqnarray}
for $E_1$ and $E_2$.
Let $G:L(X,Y)\to L(X,Y)$ be defined by $G(S) = -SA_2+D_2S$. Clearly $\|G\|<1$ by our choice of $A_2$ and $D_2$ and hence 
$I-G$ is invertible. Now it is enough to observe that (\ref{eq:diaglemma2}) is equivalent to $C = (I-G)(E_2)$ which will give us $E_2 = (I-G)^{-1}C$.
Analogously we define $F:L(Y,X)\to L(Y,X)$ by $F(S) = -A_2S + SD_2$ and then (\ref{eq:diaglemma1}) will be  equivalent to 
$-B = (I-F)(E_1)$. Applying the same argument as above we get that $I-F$ is invertible and hence $E_1 = (I-F)^{-1}(-B)$. 
\end{proof}
\begin{thm}\label{sumlp}
Let $\X = \ell_p\oplus\ell_q$ where $1\leq q<p<\infty$ and $T\in\opX$. Let $P_{\ell_p}$ and $P_{\ell_q}$ be the natural projections
from $\X$ onto $\ell_p$ and $\ell_q$ respectively. Then $T$ is a commutator if and only of $P_{\ell_p}TP_{\ell_p}$ and $P_{\ell_q}TP_{\ell_q}$
are commutators as operators acting on $\ell_p$ and $\ell_q$ respectively. 
\end{thm}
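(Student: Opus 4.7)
The plan is to handle the two directions separately, with the easy direction coming straight from Lemma \ref{diaglemma} and the harder direction relying on Pitt's theorem together with the known characterizations of commutators on the individual $\ell_r$ spaces.

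For the ``if'' direction, I would first observe that $P_{\ell_p}TP_{\ell_p}$ regarded as an operator on $\ell_p$ is precisely the $(1,1)$-block $A$ of the matrix decomposition of $T$ with respect to $\ell_p\oplus\ell_q$, and likewise $P_{\ell_q}TP_{\ell_q}$ is the $(2,2)$-block $D$. Thus the hypothesis reduces to saying that $A$ is a commutator on $\ell_p$ and $D$ is a commutator on $\ell_q$. Writing $T=\left(\begin{array}{cc} A & B \\ C & D \end{array}\right)$ and applying Lemma \ref{diaglemma} with $X=\ell_p$, $Y=\ell_q$ then gives the conclusion at once, since the off-diagonal blocks $B,C$ are unconstrained in that lemma.

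For the converse, the strategy is to construct an algebra homomorphism
$$
\pi\colon\mathcal{L}(\ell_p\oplus\ell_q)\longrightarrow\mathcal{L}(\ell_p)/K(\ell_p)\,\oplus\,\mathcal{L}(\ell_q)/K(\ell_q)
$$
sending $T$ to the pair of classes $(A+K(\ell_p),\,D+K(\ell_q))$ of its diagonal blocks. The decisive input is Pitt's theorem: since $1\le q<p<\infty$, every bounded operator $\ell_p\to\ell_q$ is compact, so the $(2,1)$-block of every $T\in\mathcal{L}(\ell_p\oplus\ell_q)$ is automatically compact; hence when two such matrix operators are multiplied, the mixed terms $B_1C_2$ on $\ell_p$ and $C_1B_2$ on $\ell_q$ arising in the new diagonal blocks are compositions of bounded with compact operators and are therefore compact. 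This makes $\pi$ multiplicative, and linearity is immediate. Granting $\pi$, if $T=[S_1,S_2]$ then $\pi(T)=[\pi(S_1),\pi(S_2)]$ splits coordinatewise, so $A+K(\ell_p)$ and $D+K(\ell_q)$ are commutators in their respective Calkin algebras. Wintner's theorem then forbids either class from being a nonzero scalar multiple of the identity, so neither $A$ nor $D$ is of the form $\lambda I+K$ with $\lambda\ne 0$ and $K$ compact. Apostol's classification of commutators on $\ell_r$ for $1<r<\infty$ covers $A$ (since $p>1$) and covers $D$ when $q>1$, while Theorem \ref{mainthm} of the present paper handles $D$ in the remaining case $q=1$; in all cases $A$ and $D$ are commutators on their respective spaces.

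The only real obstacle is verifying that $\pi$ is well defined as a multiplicative map, and Pitt's theorem does all the essential work there; every remaining step is an assembly of ingredients already in hand (Lemma \ref{diaglemma}, Wintner's theorem, Apostol's result, and Theorem \ref{mainthm}).
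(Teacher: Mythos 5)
Your proposal is correct and follows essentially the same route as the paper: the ``if'' direction is Lemma \ref{diaglemma} verbatim, and the ``only if'' direction rests on the same ingredients — Pitt's theorem making the $(2,1)$-blocks compact, Wintner's theorem in the Calkin algebras, and the classifications of commutators on $\ell_p$ and $\ell_1$. The paper just carries out the block computation of $[T_1,T_2]$ explicitly and reads off that the diagonal entries are commutator-plus-compact, whereas you package the same observation as a homomorphism onto the direct sum of Calkin algebras; this is a cosmetic, not a substantive, difference.
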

\begin{proof}
Throughout the proof we will work with the matrix representation of $T$ as an operator acting on $\X$. Let 
$T = \left( \begin{array}{cc}
A & B   \\
C & D 
\end{array} \right)
$ where $A:\ell_p\to\ell_p, D:\ell_q\to\ell_q, B:\ell_q\to\ell_p, C:\ell_p\to\ell_q$. The well known fact that the operator $C$ is compact
 (\cite[Proposition 2.c.3]{LT}) will play an important role in the proof. 
If $T$ is a commutator, then  $T = [T_1,T_2]$ for some $T_1, T_2\in\opX$. Write 
$T_i = \left( \begin{array}{cc}
A_i & B_i   \\
C_i & D_i 
\end{array} \right)
$ for $i = 1,2$. A simple computation shows that  
$$
T = \left( \begin{array}{cc}
[A_1, A_2] + B_1C_2 - B_2C_1 & A_1B_2 + B_1D_2 - A_2B_1 - B_2D_1   \\
C_1A_2 + D_1C_2 - C_2A_1 - D_2C_1 & [D_1, D_2] + C_1B_2 - C_2B_1 
\end{array} \right) .
$$
From the classification of the commutators on $\ell_p$ for $1\leq p<\infty$ and the fact that the $C_i$'s are compact we immediately deduce that
the diagonal entries in the last representation of $T$ are commutators. For the other direction we apply  Lemma \ref{diaglemma} which concludes the proof.

\end{proof}
The classification given in the theorem can be immediately generalized to a space which is finite sum of $\ell_p$ spaces, namely, we have the following

\begin{cor}
Let $\X = \ell_{p_1}\oplus\ell_{p_2}\oplus\cdots\oplus\ell_{p_n}$ where $1\leq p_n < p_{n-1}< \ldots <p_1 < \infty $ and $T\in\opX$. Let $P_{\ell_{p_i}}$ 
be  the natural projections from $\X$ onto $\ell_{p_i}$ for $i = 1,2,\ldots ,n$. Then $T$ is a commutator if and only if for each $1\leq i\leq n$,
$P_{\ell_{p_i}}TP_{\ell_{p_i}}$  is a commutator as an operator acting on $\ell_{p_i}$. 
\end{cor}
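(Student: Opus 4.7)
The plan is to induct on $n$, taking Theorem \ref{sumlp} as the base case $n=2$. For the inductive step I write $\X=\ell_{p_1}\oplus Y$ with $Y=\ell_{p_2}\oplus\cdots\oplus\ell_{p_n}$ and treat the two implications separately.

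For the forward direction, suppose $T=[T_1,T_2]$ and write each $T_k$ as an $n\times n$ operator matrix $(T_k^{ij})$ with $T_k^{ij}\colon\ell_{p_j}\to\ell_{p_i}$. A direct expansion gives
$$
P_{\ell_{p_i}}TP_{\ell_{p_i}}=[T_1^{ii},T_2^{ii}]+\sum_{j\neq i}\bigl(T_1^{ij}T_2^{ji}-T_2^{ij}T_1^{ji}\bigr).
$$
For each $j\neq i$ the strict ordering $p_1>p_2>\cdots>p_n$ forces either $p_i>p_j$ or $p_j>p_i$, so Pitt's theorem \cite[Prop.~2.c.3]{LT} makes one of $T_k^{ij},T_k^{ji}$ compact; consequently every cross term is compact and the whole sum is a compact operator $K_i$ on $\ell_{p_i}$. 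Now $[T_1^{ii},T_2^{ii}]$ is a commutator on $\ell_{p_i}$, hence — by Apostol's classification \cite{Apostol_lp} when $p_i>1$ or by Theorem \ref{mainthm} when $p_i=1$ — it is not of the form $\lambda I+K$ with $\lambda\neq 0$ and $K$ compact. Adding the compact $K_i$ preserves this property, so $P_{\ell_{p_i}}TP_{\ell_{p_i}}$ is a commutator on $\ell_{p_i}$.

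For the backward direction, assume every $P_{\ell_{p_i}}TP_{\ell_{p_i}}$ is a commutator and view $T$ as a $2\times 2$ block matrix with respect to $\X=\ell_{p_1}\oplus Y$. Its $(1,1)$ block is a commutator on $\ell_{p_1}$ by hypothesis; its $(2,2)$ block $D=P_YTP_Y$ has diagonal subblocks $P_{\ell_{p_i}}DP_{\ell_{p_i}}=P_{\ell_{p_i}}TP_{\ell_{p_i}}$ for $i\geq 2$ which are also commutators. The inductive hypothesis applied to the space $Y$ gives that $D$ is a commutator on $Y$, and Lemma \ref{diaglemma} applied with $X=\ell_{p_1}$ and $Y$ immediately yields that $T$ is a commutator on $\X$.

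The argument is essentially bookkeeping once the key inputs — Pitt's theorem for cross-term compactness, the classification on each $\ell_{p_i}$, and Lemma \ref{diaglemma} — are in place. I do not foresee a serious obstacle; the one mild subtlety is that the forward implication does not itself use induction but follows directly for each $i$ from the known single-space classification, whereas induction enters only in the backward implication, to pass from the commutator status of the diagonal subblocks on $Y$ to commutator status of $D$ on $Y$.
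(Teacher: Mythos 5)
Your proposal is correct. The backward implication is exactly the paper's argument: write $\X=\ell_{p_1}\oplus Y$ with $Y=\ell_{p_2}\oplus\cdots\oplus\ell_{p_n}$, use the inductive hypothesis to see that $P_YTP_Y$ is a commutator on $Y$ (its diagonal subblocks are the given $P_{\ell_{p_i}}TP_{\ell_{p_i}}$), and conclude with Lemma \ref{diaglemma}. Where you genuinely diverge is the forward implication. The paper stays at the level of the $2\times 2$ splitting $\ell_{p_1}\oplus Y$ and repeats the computation of Theorem \ref{sumlp}: the off-diagonal entries mapping $\ell_{p_1}$ into $Y$ are compact by Pitt, so $P_YTP_Y$ is a commutator on $Y$ plus a compact operator, and since there is no classification theorem on the direct sum $Y$ itself, the paper must invoke the inductive hypothesis a second time to show that a compact perturbation of a commutator on $Y$ is still a commutator, and then once more to pass down to the individual blocks $P_{\ell_{p_i}}TP_{\ell_{p_i}}$. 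You instead expand $T=[T_1,T_2]$ in the full $n\times n$ operator matrix, observe that every cross term $T_1^{ij}T_2^{ji}-T_2^{ij}T_1^{ji}$ ($j\neq i$) is compact because Pitt's theorem \cite[Proposition 2.c.3]{LT} makes one factor compact regardless of whether $p_i>p_j$ or $p_j>p_i$, and then apply the single-space classification (\cite{Apostol_lp} for $p_i>1$, Theorem \ref{mainthm} for $p_i=1$) to see that a compact perturbation of the commutator $[T_1^{ii},T_2^{ii}]$ is again a commutator on $\ell_{p_i}$. This makes the forward direction induction-free and somewhat more transparent, at the cost of needing the classification on every summand explicitly; the paper's version keeps the bookkeeping to $2\times 2$ blocks but leans more heavily on the inductive statement. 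Both routes are valid and rest on the same three ingredients: Pitt compactness, the $\ell_p$ classification, and Lemma \ref{diaglemma}.
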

\begin{proof}
We will proceed by induction on $n$ and clearly Theorem \ref{sumlp} gives us the result for $n=2$. If the statement is true for some $n$, then to show
it for $n+1$, denote $Y = \ell_{p_2}\oplus\ell_{p_3}\oplus\cdots\oplus\ell_{p_n}$. Now $\X = \ell_{p_1}\oplus Y$ and using the same argument as in 
Theorem \ref{sumlp} we can see that if $T$ is a commutator, then both $P_{\ell_{p_1}}TP_{\ell_{p_1}}$ and $P_{\ell_{Y}}TP_{\ell_{Y}}$
are commutators on $\ell_{p_1}$ and $Y$ respectively. Here we use the induction step to show that compact perturbation of a commutator on $Y$ is still a  
commutator. The other direction is exactly as in Theorem \ref{sumlp}. It is worthwhile noticing that for this direction we do not need any assumption on
the spaces in the sum.
\end{proof}

Our last result shows that
every strictly singular operator
in $L(\linf)$ is a commutator. Clearly this is an essential step in proving the conjecture about the classification of the commutators on $\linf$, namely,
that an operator $T\in L(\linf)$ is not a commutator if and only if $T = \lambda I + S$ for some strictly singular operator $S$
and some $\lambda\neq 0$, but because
of the structure of $\linf$ we cannot apply the method developed in this paper. Note also that the ideal of the strictly singular operators 
is the largest ideal in $L(\linf)$ (follows from \cite[Theorem 1.2]{Whitley} and \cite[Corollary 1.4]{Rosenthal_linfty}), 
the proof of which we include for completeness. In order to develop (if at all possible) a similar approach,
one may have to find a suitable substitution for the set $\AD$ defined in (\ref{eq:ADDef}) and an analog of the left essential spectrum 
(Definition \ref{def:les}). Also, a couple of times in this paper we have used the fact that every infinite dimensional subspace of 
$\ell_p$ ($1\leq p < \infty $) contains a further subspace isomorphic to $\ell_p$ and complemented in $\ell_p$, which does not hold for 
$\ell_{\infty}$. This additional obstacle should be overcome as well. First we will prove

\begin{lem}
The ideal of strictly singular operators is the largest ideal in $L(\linf)$.
\end{lem}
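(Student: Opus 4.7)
The plan is to prove that the closed two-sided ideal $\mathcal{SS}(\linf)$ of strictly singular operators is proper and that every operator outside it generates, as a two-sided ideal, all of $L(\linf)$. Properness is immediate since $I$ is not strictly singular, and the fact that the strictly singular operators form a closed two-sided ideal is a general fact about any Banach space. Thus the content is: for any $T \in L(\linf)$ that is not strictly singular, I need to exhibit $A, U \in L(\linf)$ with $A T U = I$, for then the ideal generated by $T$ contains $I$ and must equal $L(\linf)$.

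Let $T \in L(\linf)$ fail to be strictly singular. By definition there is an infinite-dimensional closed subspace $Y \subseteq \linf$ on which $T$ is an isomorphism. The key input is \cite[Corollary 1.4]{Rosenthal_linfty}, which asserts that every infinite-dimensional subspace of $\linf$ contains a further subspace isomorphic to $\linf$. Applying this to $Y$, I obtain $Z \subseteq Y$ with $Z \simeq \linf$. Since $\linf$ is injective, any isomorphic copy of $\linf$ sitting inside $\linf$ is automatically complemented there: if $V: Z \to \linf$ is an isomorphism, Hahn--Banach extends $V$ to $\widetilde{V} \in L(\linf)$, and then $V^{-1}\widetilde{V}$ is a bounded projection onto $Z$. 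Because $T|_Z$ is an isomorphism onto $T(Z)$ and $Z \simeq \linf$, also $T(Z) \simeq \linf$, and the same argument shows $T(Z)$ is complemented in $\linf$.

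Now I fix an isomorphism $U: \linf \to Z$ (regarded as an operator on $\linf$ via the inclusion $Z \hookrightarrow \linf$) and a bounded projection $P: \linf \to T(Z)$. The composition $TU: \linf \to T(Z)$ is an isomorphism onto $T(Z)$, so $(TU)^{-1}: T(Z) \to \linf$ is bounded. Setting $A = (TU)^{-1} P \in L(\linf)$, the fact that $TU(x) \in T(Z)$ for every $x \in \linf$, together with $P$ acting as the identity on $T(Z)$, gives
$$
A T U \;=\; (TU)^{-1} P (TU) \;=\; (TU)^{-1}(TU) \;=\; I,
$$
as required. Hence the ideal generated by $T$ is all of $L(\linf)$, so any proper ideal is contained in $\mathcal{SS}(\linf)$.

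The substantive step is the invocation of Rosenthal's Corollary 1.4 to pass from an arbitrary infinite-dimensional subspace of $\linf$ down to an isomorphic copy of $\linf$ itself; everything afterwards is a routine compress-and-invert construction powered by injectivity. The one point requiring a moment of care is the complementation of $T(Z)$, but this is handled in exactly the same way as the complementation of $Z$ by invoking injectivity with the codomain copy of $\linf$.
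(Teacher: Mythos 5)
Your reduction to exhibiting $A,U$ with $ATU=I$, and the complementation-by-injectivity argument for $Z$ and $T(Z)$, are fine; the problem is the step that carries all the weight. You quote \cite[Corollary 1.4]{Rosenthal_linfty} as asserting that every infinite-dimensional subspace of $\ell_{\infty}$ contains a further subspace isomorphic to $\ell_{\infty}$. That statement is false, and it is not what Rosenthal's corollary says: $\ell_{\infty}$ contains separable infinite-dimensional subspaces (the closed span of any sequence, e.g.\ copies of $c_0$ or $\ell_2$), and a separable space cannot contain the non-separable space $\ell_{\infty}$. Since ``$T$ is not strictly singular'' only hands you an isomorphism on \emph{some} infinite-dimensional subspace $Y$, which may well be separable, you cannot extract a copy $Z\simeq\ell_{\infty}$ inside $Y$, and the factorization of $I$ through $T$ collapses at its first move.

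What \cite[Corollary 1.4]{Rosenthal_linfty} actually provides is a statement about operators rather than subspaces: an operator on $\ell_{\infty}$ that is \emph{not weakly compact} is an isomorphism on some subspace isomorphic to $\ell_{\infty}$. To invoke it you must first upgrade ``not strictly singular'' to ``not weakly compact,'' and that is precisely the ingredient the paper supplies via \cite[Theorem 1.2]{Whitley}: on $\ell_{\infty}$ the ideals of weakly compact and strictly singular operators coincide. Once you have $T$ an isomorphism on a subspace $Z\simeq\ell_{\infty}$, your compress-and-invert construction (complement $Z$ and $T(Z)$ by injectivity, set $A=(TU)^{-1}P$) goes through and reproduces the paper's conclusion that $I_{\ell_{\infty}}$ factors through $T$. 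As written, however, the proof has a genuine gap at its central step.
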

\begin{proof}
Assume that $T$ is not a strictly singular operator. Our goal will be to prove that any ideal that contains $T$ must coincide with $L(\linf)$.
Note first that on $\linf$ the ideals of the weakly compact and the strictly singular operators coincide  (\cite[Theorem 1.2]{Whitley}).
Then we use the fact that any non-weakly compact operator is an isomorphism on some subspace $Y$ of $\linf$ isomorphic to $\linf$ 
(\cite[Corollary 1.4]{Rosenthal_linfty}). The subspaces $Y$  and $TY$ will be  automatically complemented in $\linf$ because $\linf$ is an injective space. 
This automatically yields that $I_{\linf}$ factors through $T$ and hence any ideal containing $T$ coincides with $L(\linf)$.
\end{proof}

\begin{thm}
Let $T\in L(\linf)$ be a strictly singular operator.  Then T is a commutator.
\end{thm}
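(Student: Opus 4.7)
The plan is first to reduce to the compact case, and then to handle compact operators by a structural argument that bypasses the analytic framework of Section~2, which the author has explicitly flagged as inapplicable to $\ell_\infty$.

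By Whitley's theorem, as invoked in the lemma just above, a strictly singular operator on $\ell_\infty$ is weakly compact. Three features of $\ell_\infty$ combine to upgrade this to full compactness: (a) the unit ball $B_{\ell_\infty}$ is weak$^*$-metrizable because the predual $\ell_1$ is separable, so every bounded sequence has a weak$^*$-convergent subsequence; (b) the Grothendieck property promotes weak$^*$-convergent sequences to weakly convergent ones; (c) the Dunford--Pettis property forces every weakly compact operator to be completely continuous. Applied in sequence, (a)--(c) show that $T$ maps bounded sequences to norm-convergent ones, i.e.\ $T$ is compact. The theorem therefore reduces to showing that every compact operator on $\ell_\infty$ is a commutator.

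For compact $T$, the plan is to assemble three ingredients. First, the canonical left and right shifts on $\ell_\infty$ satisfy the identity $[L,R] = e_1 \otimes e_1^*$, exhibiting the rank-one projection onto the first coordinate as an explicit commutator; by a coordinate permutation together with a rank-one similarity, every rank-one operator on $\ell_\infty$ is then a commutator. Second, Lemma~\ref{diaglemma} promotes a $2 \times 2$ block operator on $\ell_\infty \oplus \ell_\infty \cong \ell_\infty$ whose diagonal blocks are commutators into a commutator on the whole space. Third, $\ell_\infty$ has the approximation property as an $L_\infty$-space, so every compact operator is a norm-limit of finite-rank operators, which in turn decompose as finite sums of rank-one pieces. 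Inducting on the rank via Lemma~\ref{diaglemma}, starting from the rank-one identity above, should yield a commutator presentation of each finite-rank approximant; taking the limit under uniform norm control on the constructed commutator factors then produces a commutator presentation of $T$.

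The main obstacle is exactly the one the author flags: on an $\ell_\infty$-sum decomposition the left shift fails to satisfy $\|L^n x\| \to 0$, so $\mathcal{A}(\mathcal{D})$ and the key series-based lemmas (e.g.\ \ref{EasyLemma} and \ref{PTPLemma}) have no direct analog, and $\sum_n R^n T L^n$ does not converge strongly. Replacing this missing analytic machinery with explicit commutator identities and block algebra requires careful coordination of the similarity, the $2 \times 2$ splitting, and the rank-one decomposition, together with summable norm bounds on the constituent operators so that the commutator relation survives the norm limit. This is precisely the reason only the strictly singular case is addressed: operators of the form $\lambda I + S$ with $\lambda \neq 0$ still lie outside reach, pending a suitable substitute for $\mathcal{A}(\mathcal{D})$ and the left essential spectrum.
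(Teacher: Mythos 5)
Your opening reduction is where the argument breaks. You invoke the Grothendieck property to promote $\sigma(\linf,\ell_1)$-convergent sequences in $\linf$ to weakly convergent ones, but the Grothendieck property of $\linf$ concerns weak$^*$-convergent sequences in the dual $(\linf)^*$, not sequences in $\linf$ viewed as the dual of $\ell_1$. The statement you actually need would say that $\ell_1$ is a Grothendieck space, which is false (a separable Grothendieck space is reflexive); concretely, the tails $x_n=\sum_{k\geq n}e_k$ of the constant-one vector are $\sigma(\linf,\ell_1)$-null but not weakly null, since any ultrafilter-limit functional equals $1$ on every $x_n$. As a result, strictly singular operators on $\linf$ are \emph{not} compact in general: $\linf$ is isomorphic to $L_\infty[0,1]$, and the composition of the formal inclusion $L_\infty[0,1]\to L_2[0,1]$ (non-compact, as the Rademacher functions witness) with an isomorphic embedding of $L_2$ into $\linf$ is weakly compact, hence strictly singular by \cite{Whitley}, but not compact. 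So your reduction to the compact case fails and the main body of the strictly singular class is left untreated. Independently, even for compact $T$ the final step is a genuine gap: writing each finite-rank approximant as $T_n=[A_n,B_n]$ and "taking the limit under uniform norm control" does not produce a commutator representation of $T$; the factors $A_n,B_n$ need not converge in any topology in which multiplication is continuous, and the set of commutators is not known to be norm-closed, so nothing passes to the limit.

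For comparison, the paper's proof never seeks compactness or approximation by finite-rank operators. From weak compactness (\cite[Corollary 1.4]{Rosenthal_linfty}) it deduces that $Y=\overline{T\linf}$ is separable; by \cite[Theorem 4]{LR_automorphisms}, if $\linf/Y$ were reflexive then $Y$ would contain a copy of $\linf$, contradicting separability, so the quotient map $Q:\linf\to\linf/Y$ is not weakly compact; Rosenthal's theorem then yields $X\subset\linf$, $X\simeq\linf$, with $Q_{|X}$ an isomorphism, and injectivity of $\linf$ produces a projection $P=(Q_{|X})^{-1}P'Q$ with $P\linf\simeq\linf$ and $PT=0$. After a similarity one gets $P_MT'=0$ for an infinite $M\subset\N$, and \cite[Theorem 2.9]{Apostol_lp} applies. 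The operative structure is thus a large complemented copy of $\linf$ annihilating the range, not compactness; if you want to repair your approach, that is the target to aim for.
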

\begin{proof}
Since T is a strictly singular operator, $T$ is weakly compact (\cite[Corollary 1.4]{Rosenthal_linfty} ). 
Thus it follows that $T\linf$ is separable (since any weakly compact subset of the dual to any separable space is metrizable) and let $Y =\overline{T\linf}$.
The space $\linf/Y$ must be non-reflexive since assuming otherwise gives us that Y has a subspace isomorphic to $\linf$ 
(\cite[Theorem 4]{LR_automorphisms}).  Now consider the quotient map $Q: \linf\to\linf/Y$. $Q$ is not weakly compact and hence 
(using again \cite[Corollary 1.4 ]{Rosenthal_linfty}) there exists $X\simeq \linf, X\subset\linf$ such that $Q_{|X}$ is an isomorphism.
Let $P'$ be a projection onto $QX$ and set $P = (Q_{|X})^{-1}P'Q$. $P$ is a projection in $\linf,\, PY=\{0\}$ and by the construction, $P\linf$ 
isomorphic to $\linf$. Thus it follows that $PT = 0$ and we obtain
that $T$ is similar to an operator $T'$ for which there exists a $M\subset\N$ such that $P_MT' = 0$. Using \cite[Theorem 2.9 ]{Apostol_lp}
we conclude that $T'$ is commutator and hence $T$ is commutator.
\end{proof}

\newpage
\nocite{*}

Department of Mathematics, Texas A\&M University, College Station, TX 77840\\
\phantom{qqm}{\textit{E-mail address :}} {\textbf{dossev@math.tamu.edu}}
\end{document}